\newtheorem{thm}{Theorem}[section]
\newtheorem{lem}[thm]{Lemma}
\newtheorem{prop}[thm]{Proposition}
\theoremstyle{definition}
\newtheorem{defn}[thm]{Definition}
\theoremstyle{remark}
\newtheorem{rem}[thm]{Remark}
\numberwithin{equation}{section}
\newcommand{\R}{\mathbb R}
\newcommand{\p}{\partial}
\newcommand{\comment}[1]{}
\begin{document}

\title[Thin free boundaries]{$C^{\infty}$ regularity of certain thin free boundaries}
\author{D. De Silva}
\address{Department of Mathematics, Barnard College, Columbia University, New York, NY 10027}
\email{\tt  desilva@math.columbia.edu}
\author{O. Savin}
\address{Department of Mathematics, Columbia University, New York, NY 10027}\email{\tt  savin@math.columbia.edu}

\thanks{ D.~D.~ and O.~ S.~  are supported by the ERC starting grant project 2011 EPSILON (Elliptic PDEs and Symmetry of Interfaces and Layers for Odd Nonlinearities). D.~D. is supported by NSF grant DMS-1301535. O.~S.~ is supported by NSF grant DMS-1200701.}
\keywords{One-phase free boundary problem; Schauder estimates.}

\begin{abstract}
We continue our study of the free boundary regularity in the thin one-phase problem and show that $C^{2,\alpha}$ free boundaries are smooth.
\end{abstract}
\maketitle

\section{Introduction}

In this paper we investigate $C^\infty$ regularity of the free boundary in the  {\it thin one-phase} problem. In general a thin free boundary refers to a problem in which the free boundary is expected to have codimension 2.

We consider the thin one-phase problem (or thin Bernoulli problem) which consists in finding a non-negative function
$$u: \overline {B_1} \subset \R^{n+1} \to \R, \quad \quad u\in C( \overline {B_1})$$
with prescribed values $u=\varphi \ge 0$ on $\p B_1$, such that $u$ satisfies
$$\{u=0\} \subset \{x_{n+1}=0\},$$
and
\begin{equation}\label{fb}
\begin{cases}
\Delta u =0 \quad \quad \quad \mbox{in $\{u>0\}$}\\
\frac{\p u}{\p \sqrt t}=1 \quad \quad \quad \mbox{on $\Gamma:=\p_{\R^n} \{u=0\} \subset \{x_{n+1}=0\}.$}
\end{cases}
\end{equation}
We used the notation
$$\frac{\p u}{\p \sqrt t}(Z):=\lim_{t\to 0^+} \frac{u(z+ t \nu,0)}{t^{1/2}}, \quad Z=(z,0) \in \Gamma,$$
and $\nu$ denotes the outward normal to the free boundary $\Gamma$ in $\R^n$. There is an energy functional associated to this problem,
\begin{equation}\label{e}
E(u):= \int  |\nabla u|^2 \,dX + \, \frac \pi 2 \, \, \mathcal H^{n} \left(\{u>0\} \cap \{x_{n+1}=0\}\right),
\end{equation}
and solutions to \eqref{fb} are critical points for $E$.

To fix ideas we explain the situation in the simplest case $n=1$. Typically $u$ vanishes continuously on a number of segments on $\{x_2=0\}$ and $u$ is positive harmonic on the two dimensional disk away from these segments. In this case the free
boundary $\Gamma$ consists of the endpoints of these horizontal segments. A harmonic function grows on the $x_1$-axis as $a \, d^{1/2} +o(d^{1/2})$ away from its vanishing segments, for some constant $a$, where $d$ represents the distance to the zero set. The free boundary condition above requires that the constant $a$ must be $1$ for all endpoints. It can be understood as a Neumann type condition which determines the set $\{u=0\}$.

The thin one-phase free boundary problem was first considered by Caffarelli, Roquejoffre and Sire \cite{CRS} as a model of a one-phase Bernoulli type free boundary problem in the context of the fractional Laplacian. It appears in flame propagation when turbulence or long-range interactions are present. When $n=2$, the problem \eqref{fb} is related to models involving traveling wave solutions for planar cracks. In this setting $\{u=0\}$ represents the location of the crack in a 3D material and the free boundary $\Gamma$ is one-dimensional and represents the edge of the crack. For further information on this model see \cite{CRS} and the references therein.

The study of the regularity of thin one-phase free boundaries was initiated in \cite{DR}, where it was shown that ``flat" free boundaries are $C^{1,\alpha}$.  In \cite{DS1}, \cite{DS2} we  continued investigating this regularity issue. These results parallel the regularity theory for the free boundary in the classical one-phase problem and in the theory of minimal surfaces. We showed that Lipschitz free boundaries are of class $C^{2,\alpha}$ and local minimizers of $E$ have $C^{2,\alpha}$ free boundary except possibly for a small singular set of Hausdorff dimension $n-3$. In the current paper we address the issue of higher regularity of the free boundary. We prove that $C^{2,\alpha}$ free boundaries are in fact smooth.

 \begin{thm}\label{MaI}
 Assume $u$ satisfies \eqref{fb} and $\Gamma \in C^{2,\alpha}$. Then $\Gamma \in C^\infty$.
 \end{thm}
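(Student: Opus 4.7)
The plan is to unfold the half-power singularity of $u$ at $\Gamma$ by a conformal change of variables in the normal two-plane, reducing the thin free boundary problem to an elliptic boundary value problem of Bessel type in a half-space, and then to bootstrap regularity by Schauder theory.

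Fix a point of $\Gamma$ and, using $\Gamma \in C^{2,\alpha}$, parametrize it locally as $\{x_n = g(x'), x_{n+1} = 0\}$ with $g(0)=0$ and $\nabla g(0) = 0$. The $C^{2,\alpha}$ diffeomorphism $(x', x_n, x_{n+1}) \mapsto (x', x_n - g(x'), x_{n+1})$ straightens $\Gamma$ to $\{x_n = x_{n+1} = 0\}$; in the new coordinates $u$ vanishes on the slit $\{x_n \le 0,\, x_{n+1} = 0\}$, satisfies a uniformly elliptic equation $\mathcal{L}_g u = 0$ with $C^{\alpha}$ coefficients depending on $g$ and its derivatives up to order two, and obeys the normalization $u(te_n,0)/\sqrt{t} \to 1$ as $t \to 0^+$. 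Now apply the square-root conformal map $\xi + i\eta = \sqrt{x_n + i x_{n+1}}$ with $\xi \ge 0$ in the normal 2-plane: the slit becomes the boundary half-space $\{\xi = 0\}$, the free boundary $\Gamma$ becomes the codimension-two subspace $\{\xi = \eta = 0\}$, and the singular profile $\sqrt{r}\cos(\theta/2)$ becomes the linear function $\xi$. Setting $v(x',\xi,\eta) := u(x', \xi^2-\eta^2, 2\xi\eta)$, one obtains
\[
v_{\xi\xi} + v_{\eta\eta} + 4(\xi^2+\eta^2)\,\mathcal{L}_g' v \;=\; 0, \qquad \xi > 0,
\]
with $v = 0$ on $\{\xi = 0\}$ and the trace condition $\partial_\xi v = 1$ on $\{\xi = \eta = 0\}$.

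Writing $v = \xi(1 + w)$, the new unknown $w$ is continuous up to $\{\xi = 0\}$ with $w = 0$ on the free boundary, and solves the degenerate elliptic Bessel-type equation
\[
w_{\xi\xi} + \tfrac{2}{\xi}\, w_\xi + w_{\eta\eta} + 4(\xi^2+\eta^2)\,\Delta_{x'} w + \text{(lower order)} \;=\; 0,
\]
for which Schauder estimates in appropriately weighted H\"older spaces are available. The free boundary condition $w(x',0,0) = 0$, combined with solvability of the PDE at fixed $g$, yields an implicit relation $g \mapsto w_g \mapsto w_g(\cdot,0,0)$ whose linearization at the model solution is an elliptic oblique problem. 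By a Kinderlehrer--Nirenberg style bootstrap (equivalently, applying the implicit function theorem in H\"older scales), each iteration of the elliptic theory for $w$ combined with the free boundary relation gains one derivative on $g$, giving $g \in C^{k,\alpha}$ for every $k$, hence $\Gamma \in C^\infty$.

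The main obstacle is the Bessel degeneracy at $\{\xi = 0\}$ coupled with the fact that the free boundary condition is imposed on the codimension-two set $\{\xi = \eta = 0\}$ rather than on the whole of $\{\xi = 0\}$: the singular coefficient $2/\xi$ forces one to work in weighted H\"older (or equivalently $L^2$-based radial) spaces, while the low-dimensional support of the Neumann-type data must be reconciled with the extra gauge freedom provided by the Bessel structure (regular solutions automatically have $w_\xi = 0$ on $\{\xi = 0\}$). Verifying that the linearized operator is Fredholm of index zero, and that tangential differentiation preserves the weighted H\"older framework, is the central technical point; once this is in place, the inductive gain of regularity proceeds in the standard way.
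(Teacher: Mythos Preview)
Your outline identifies a natural strategy---unfold the square-root singularity by the conformal map $z\mapsto\sqrt z$ and appeal to weighted Schauder theory---but it contains a structural gap that prevents the bootstrap from closing. After you straighten $\Gamma$ by $(x',x_n,x_{n+1})\mapsto(x',x_n-g(x'),x_{n+1})$, the function $g$ sits in the \emph{coefficients} of $\mathcal L_g$; if $g\in C^{k+2,\alpha}$ these coefficients are only $C^{k,\alpha}$, and even an optimal weighted Schauder estimate returns $w\in C^{k+2,\alpha}$ at best. Since the free boundary is now fixed at $\{\xi=\eta=0\}$ and the relation $w(x',0,0)=0$ is automatically satisfied by the given solution, solving for $w$ tells you nothing new about $g$. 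Your ``implicit relation $g\mapsto w_g(\cdot,0,0)$'' is not an equation for $g$: for the true $g$ it vanishes identically, and for nearby $g$'s the map carries only the regularity of the coefficients. A genuine hodograph transform would use $u$ itself as a coordinate so that $g$ becomes part of the \emph{solution}; you have not done this, and the half-power behavior makes that step delicate. The paper's introduction explicitly warns that the equation does not behave well under such changes of variables and that flattening $\Gamma$ costs a derivative. Finally, you concede that the Fredholm/weighted-Schauder analysis on the codimension-two set is ``the central technical point'' and leave it unverified; that is essentially the whole content of the theorem.

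The paper proceeds differently and never straightens $\Gamma$. Working directly in the slit domain, it proves a Schauder estimate (Theorem~\ref{Schauder}): if $\Gamma\in C^{k+2,\alpha}$ then $u/U_0$ and $r\,\nabla_x u/U_0$ admit degree-$(k{+}1)$ polynomial expansions in the variables $(x,r)$ along $\Gamma$. The gain of a derivative comes from the quotient $w=u_i/u_n$, which is (minus) the $i$-th derivative of the level sets of $u$ viewed as graphs in the $e_n$ direction, so that $w|_\Gamma=-g_i$. Differentiating the free boundary condition shows that $w$ solves the weighted Neumann problem $\Delta(u_n w)=0$ in $B_1\setminus\mathcal P$, $w_\nu=0$ on $\Gamma$. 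A second Schauder theorem for this Neumann problem (Theorem~\ref{thin}) yields $w\in C^{k+2,\alpha}_{xr}(\Gamma)$, hence $g_i\in C^{k+2,\alpha}$ and $\Gamma\in C^{k+3,\alpha}$, closing the iteration. Your conformal map $z\mapsto z^2$ does appear in the paper, but only in the appendix to analyze the flat model $\Gamma=\{x_n=0\}$; the passage from flat to curved $\Gamma$ is carried out by a Campanato-type perturbation in dyadic balls, with Whitney extension used to handle the loss-of-derivative issue, rather than by any global change of coordinates.
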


 The techniques developed in this paper are quite general and can be used to investigate the higher regularity of other thin free boundaries.  One example of thin free boundary arises in the so-called {\it thin obstacle problem} also known as the {\it Signorini problem} (see for example \cite{ACS, CSS, GP}). 
 
 The main difficulty in the thin one-phase problem occurs near the free boundary where all derivatives of $u$ blow up and the problem becomes degenerate. We discuss briefly the free boundary regularity in the case of the classical Bernoulli problem (\cite{AC, C1,C2}):
\begin{equation}\label{Bern}
\begin{cases}
\Delta u =0 \quad \mbox{in $\{u>0\},$}\\
|\nabla u|=1 \quad \mbox{on $\Gamma:=\p \{u>0\}.$}
\end{cases}
\end{equation}
 The analyticity of $C^{1,\alpha}$ free boundaries $\Gamma$ was obtained by Kinderlehrer, Nirenberg and Spruck in \cite{KNS}.
 They used the hodograph transform to reduce the problem to a nonlinear Neumann problem with fixed boundary.
 We sketch below an equivalent argument to prove higher regularity of $\Gamma$ in \eqref{Bern}. It avoids the hodograph transformation and it makes use of Schauder estimates for both a Dirichlet and a Neumann problem. We will follow this strategy also in the proof of our main result Theorem \ref{MaI}.

 Assume that $\Gamma \in C^{k+2,\alpha}$ for some $k \ge 0$. Then by Schauder estimates for the Dirichlet problem in the set $\{u>0\}$ we find
 \begin{equation}\label{bs}
 \Gamma \in C^{k+2,\alpha} \quad \Rightarrow \quad u \in C^{k+2,\alpha}.
 \end{equation}
Also, 
\begin{equation}\label{bs1}
\Delta (u_n w)=0 \quad \text{in $\{ u>0\}$} \quad \mbox{and} \quad w_\nu=0 \quad \text{on $\Gamma$}, \quad \mbox{with} \quad w:=\frac{u_i}{u_n},
\end{equation}
where the Neumann condition follows by differentiating the free boundary condition in \eqref{Bern} along $\Gamma$.
Geometrically the quotient $w$ represents the $i$-derivative of the level set of $u$ viewed as a graph in the $e_n$ direction. Since $u_n$ is harmonic we can write the equation above as an equation with coefficients in $C^{k,\alpha}$ (see \eqref{bs})
$$\Delta w + 2 \frac{\nabla u_n}{u_n} \cdot \nabla w =0.$$
Now we apply the Schauder estimates for the Neumann problem and obtain that solutions to \eqref{bs1} satisfy
$w \in C^{k+2,\alpha}$ and this gives $\Gamma \in C^{k+3,\alpha}$.

It turns out that in the thin one-phase problem the quotient $w$ still satisfies \eqref{bs1}. We prove Theorem \ref{MaI} by obtaining regularity results as \eqref{bs}-\eqref{bs1} in the context of the thin free boundary problem.

To this aim, we consider Schauder estimates at the boundary for harmonic functions in {\it slit domains}, see Theorem \ref{Schauder} for a precise statement. A slit domain is a domain in $\R^{n+1}$ from which we remove an $n$-dimensional set $\mathcal P \subset \{x_{n+1}=0\}$ (slit), with $C^{k+2,\alpha}$ boundary in $\R^n$, $\Gamma:=\p_{\R^n} \mathcal P$, $k \ge 0$.

  In the simplest case when $n=1$ and $\mathcal P$ is the negative $x_1$-axis, then a harmonic function $u$ in $B_1 \setminus \mathcal P$, even with respect to the $x_1$-axis and which vanishes continuously on $\mathcal P$, can be written near the origin as a series of homogenous harmonic functions
  $$ r^q \cos\left(q \theta \right), \quad \quad q=\frac 12,\frac 32,\frac 52,\ldots,$$
 where $r$ and $\theta$ denote the polar coordinates. In particular it follows that $u$ has an expansion at the origin of the type
$$u=U_0 \left(P(x_1,r) + O(r^{k+1+\alpha})\right), \quad \quad \quad U_0:=r^\frac 12 \cos (\theta/ 2),$$
for some polynomial $P$ of degree $k+1$, where $U_0$ denotes the first homogenous harmonic function.

In Theorem \ref{exp} we show that this expansion remains valid also for slit domains in $\R^{n+1}$ with boundary $\Gamma \in C^{k+2,\alpha}$, with $P$ a polynomial of degree $k+1$ in $x_1, \ldots, x_n$ and $r$. In this case $(r,\theta)$ denote the polar coordinates with respect to $\Gamma$.

In our next step,  we use this expansion for $u$ and obtain Schauder estimates at the boundary for solutions $w$ to the Neumann problem \eqref{bs1}. This equation is quite degenerate in our case since the weight $u_n$ is singular on $\Gamma$. We show that $w$ can be approximated in a $C^{k+2,\alpha}$ fashion by a polynomial in the variables $x_1, \ldots, x_n$ and $r$, see Theorem \ref{thin} for a precise statement.

The proofs of the Schauder estimates above use perturbation arguments, see \cite{CC}. Roughly speaking, we approximate our solutions in dyadic balls by solutions to ``constant coefficient" equations which in turn are approximated by appropriate polynomials in $x$ and $r$.

It is worth remarking that the equations we consider do not behave well under general smooth changes of coordinates.
From the expansion of $u$ near $\Gamma$ we see that one should consider changes of variables which leave $r$ and $\theta$ invariant, at least infinitesimally on $\Gamma$. For example, if we flatten the boundary $\Gamma$ and move isometrically the 2D planes perpendicular to $\Gamma$, then this change of variables has a loss of one derivative with respect to the smoothness of $\Gamma$. In the proof of Theorem \ref{thin} we use Whitney's extension theorem to overcome this technical difficulty.

The paper is organized as follows. In Sections 2,3 and 4 we introduce notation and state our main theorems from which Theorem \ref{MaI} follows. Section 5 and 6 are devoted to the proof of our Schauder estimate for solutions to Laplace's equation in slit domains. Section 7 provides the proof of the Schauder estimate for solutions to the Neumann problem. Some technical facts are proved in the Appendix.

\section{Notation and definitions}

\subsection{Notation}
We introduce some notation that we use throughout the paper.

Let $\Gamma$ be a $C^{k+2,\alpha}$ surface in $\R^n$, $k \geq 0.$ Assume for simplicity that $\Gamma$ is given by the graph of a function $g$ of $n-1$ variables
\begin{equation}\label{G}
\Gamma:=\{(x', g(x'))\}, \quad \quad \quad g:B_1' \subset \R^{n-1}\to \R,
\end{equation}
satisfying
\begin{equation*}\label{eqg}
g(0)=0, \quad \nabla_{x'} g(0) =0, \quad \|g\|_{C^{k+2,\alpha}(B_1')} \leq 1.
\end{equation*}

Let $\mathcal P$ denote the $n$ dimensional slit in $\R^{n+1}$ given by
$$\mathcal P := \{X=(x,x_{n+1}) \in B_1 \ | \ x_{n+1}=0, x_n \leq g(x')\}.$$
Notice that in the $n$ dimensional ball $B_1' \times\{0\}$ we have $\p_{\R^n} \mathcal P =\Gamma$.

Given a point $X=(x,x_{n+1})$ we denote by $d$ the signed distance in $\R^n$ from $x$ to $\Gamma$ with $d>0$ above $\Gamma$ (in the $e_n$ direction). Denote by $$r:=\sqrt{x_{n+1}^2+d^2}$$ the distance in $\R^{n+1}$ from $X$ to $\Gamma.$
We have
\begin{equation}\label{dr}
\nabla_x r= \frac d r \, \nu, \quad \quad \nu=\nabla_x d,
\end{equation}
and $\nu(x)$ represents the unit normal in $\R^n$ to the parallel surface to $\Gamma$ passing through $x$.

Let $\theta \in (-\pi, \pi]$ be the angle between the segment of length $r$ from $X$ to $\Gamma$ and  the $x$-hyperplane and define $$U_0(X):= r^{1/2}\cos \frac{\theta}{2}= \frac{1}{\sqrt 2} \sqrt{d+r}.$$
It is easy to check that
\begin{equation}\label{dr1}
\nabla_x U_0 = \frac{U_0}{2r} \,  \nu.
\end{equation}

We denote by $c$, $C$ various positive constants that depend only on $n$, $k$ and $\alpha$.

\subsection{The class $C^{k,\alpha}_{xr}$}
 In this paper we work with functions which near $\Gamma$ can be expanded as power series in the variables $x_1$, $x_2$, ..., $x_n$ and $r$. Since we deal with higher regularity we remark that these functions are not sufficiently regular when viewed in the original variable $X=(x,x_{n+1})$.
Thus we need to introduce the notion of a $C^{k,\alpha}$ function in the $(x,r)$-variables. We are interested only in power expansions at points on $\Gamma$ and for this reason we define the notion of pointwise $C^{k,\alpha}$ function in the $(x,r)$ variables.

We denote by
$$P(x,r) = \, a_{\mu m}\,  x^\mu r^m, \quad \deg P=k,$$
a polynomial of degree $k$ in the $(x,r)$ variables, and we use throughout the paper the summation convention over repeatedly indices.
Above we used the following notation:
$$x^{\mu}=x_1^{\mu_1}\ldots x_{n}^{\mu_n}, \quad |\mu| = \mu_1+ \ldots +\mu_n, \quad \quad \mu_i \ge 0.$$
Sometimes it is useful to think that $a_{\mu m}$ are defined for all indices $(\mu,m)$, by extending them to be $0$.

We also denote $$\|P\|:= \max|a_{\mu m}|.$$
\

\begin{defn}We say that a function $f: B_1 \subset \R^{n+1} \rightarrow \R$ is { \it pointwise $C^{k,\alpha}$ in the $(x,r)$-variables at $0 \in \Gamma$} and write $f \in C_{xr}^{k,\alpha}(0)$ if there exists a (tangent) polynomial $P_0(x,r)$ of degree $k$
 such that $$f(X) = P_0(x, r) + O(|X|^{k+\alpha}).$$
\end{defn}

We define $\|f\|_{C^{k,\alpha}_{xr}(0)}$ as the smallest constant $M$ such that $$\|P_0\| \leq M, \quad \mbox{and} \quad |f(X) - P_0(x,r)| \leq M|X|^{k+\alpha},$$ for all $X$ in the domain of definition.

Similarly, we may write the definition for $f$ to be pointwise $C^{k,\alpha}_{xr}$ at some other point $Z \in \Gamma$. Next we define the notion of $C^{k,\alpha}_{xr}$ on a whole subset $K \subset \Gamma$.

\

\begin{defn}Let $K \subset \Gamma.$ We say that $f \in C_{xr}^{k,\alpha} (K)$ if there exists a constant $M$ such that $f \in C_{xr}^{k,\alpha}(Z)$ for all $Z \in K$ and $\|f\|_{C_{xr}^{k,\alpha}(Z)} \leq M$ for all $Z \in K.$

The smallest $M$ in the definition above is denoted by $\|f\|_{C_{x,r}^{k,\alpha}(K)}.$
\end{defn}

\section{Harmonic functions in slit domains}
As first step towards the proof of our main Theorem \ref{MaI}, we are interested in the regularity of solutions to the Laplace equation in slit domains and their precise behavior on the edge of the slit. We collect here our main statements. First we remark that we may restrict ourself to the case when solutions are even with respect to $x_{n+1}$. Indeed, let
$$\Delta u=F \quad \mbox{in} \quad B_1 \setminus \mathcal P,$$
and $u$ vanish continuously on $\mathcal P$. We decompose $u=u_{ev}+u_{od}$ with $u_{ev}$, $u_{od}$ even respectively odd with respect to $x_{n+1}$. Notice that $u_{ev}$ and $u_{od}$ solve the Laplace equation with right hand side $F_{ev}$, respectively $F_{od}$. Since $u_{od}$ vanishes continuously on $x_{n+1}=0$, its regularity follows from the boundary regularity for Laplace equation in smooth domains. For example if $F_{od}$ is smooth, then $u_{od}$ can be expanded as a power series in $x$, $x_{n+1}$ at the origin.

Next we state our Schauder estimates in slit domains with $C^{k+2,\alpha}$ boundary.

Let $u \in C(B_1)$ be even in the $x_{n+1}$ coordinate, with $\|u\|_{L^\infty} \leq 1$, and
\begin{equation}\label{FB}
\begin{cases}
\Delta u = \dfrac{U_0}{r} f \quad \quad \text{in $B_1 \setminus \mathcal P$}\\
u=0 \quad \text{on $\mathcal P$.}
\end{cases}
\end{equation}

\begin{thm}[Schauder estimates in slit domains]\label{Schauder}
Let $\Gamma$, $u$ satisfy \eqref{G}, \eqref{FB} with $$f \in C^{k,\alpha}_{xr} (\Gamma \cap B_1), \quad \quad \|f\|_{C^{k,\alpha}_{xr} (\Gamma \cap B_1)} \leq 1.$$ Then,
\begin{equation}\label{Su}\left \|\dfrac{u}{U_0} \right \|_{C^{k+1,\alpha}_{xr} (\Gamma \cap B_{1/2})} \leq C \end{equation} and
\begin{equation}\label{SDu}\left \|\dfrac{\nabla_x u}{(U_0/r)} \right \|_{C^{k+1,\alpha}_{xr} ( \Gamma \cap B_{1/2})} \leq C \end{equation}
with $C$ a constant depending only on $n$, $k$ and $\alpha$.
\end{thm}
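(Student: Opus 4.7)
The plan is to follow the two-step program outlined in the introduction: first establish a pointwise expansion of $u$ at a single point of $\Gamma$ (Theorem \ref{exp}) and then promote it to the uniform $C^{k+1,\alpha}_{xr}$ estimate \eqref{Su} using Whitney's extension theorem; the gradient bound \eqref{SDu} is a corollary of \eqref{Su}.

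\emph{Step 1 (polynomial solutions in the flat model).} In the flat model $\Gamma_0=\{x_n=x_{n+1}=0\}$, $\mathcal P_0=\{x_n\le 0,\, x_{n+1}=0\}$, I would first verify that for every polynomial $Q(x,r)$ of degree $k$ there is a polynomial $P(x,r)$ of degree $k+1$ with $\|P\|\le C\|Q\|$ satisfying
\begin{equation*}
\Delta\bigl(U_0 P\bigr)=\frac{U_0}{r}\,Q,\qquad U_0 P\text{ vanishes on }\mathcal P_0,
\end{equation*}
and that the associated homogeneous solutions are generated by the explicit series $r^q\cos(q\theta)$, $q\in\tfrac12+\mathbb Z_{\ge 0}$, multiplied by polynomials in the tangential variables $x_1,\dots,x_{n-1}$. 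The key algebraic fact is that $P\mapsto (r/U_0)\Delta(U_0 P)$ preserves polynomials in $(x,r)$ and is invertible modulo its kernel.

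\emph{Step 2 (expansion at one point: Theorem \ref{exp}).} With $0\in\Gamma$, I would prove by dyadic iteration in the spirit of \cite{CC} that there is a polynomial $P_0(x,r)$ of degree $k+1$ with $\|P_0\|\le C$ and $|u-U_0 P_0|\le C|X|^{k+1+\alpha}$ near $0$. Concretely, at each scale $\rho=2^{-j}$ one constructs a polynomial approximation $P_\rho$ with $\|P_\rho-P_{\rho/2}\|\le C\rho^{k+1+\alpha}$ and $|u-U_0 P_\rho|\le C\rho^{k+1+\alpha}$ on $B_\rho$. The inductive step is a contradiction-and-compactness argument: after rescaling a presumed bad dyadic ball to unit size, a subsequence converges to a solution of the flat-model problem with $C^{k,\alpha}_{xr}(0)$ right-hand side and polynomial growth of controlled order; Step 1 then supplies a polynomial correction contradicting the rescaling's normalization. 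The required $L^\infty$ compactness reduces to eigenfunction expansions for harmonic functions in a flat slit, and one must show that the curved edge $\Gamma$ differs from its tangent hyperplane at $0$ only by an $O(|X|^{2+\alpha})$ perturbation of the equation.

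\emph{Step 3 (uniform bound and gradient estimate).} Applying Step 2 at every $Z\in\Gamma\cap B_{1/2}$ yields tangent polynomials $P_Z(x-z,r)$ with uniformly bounded coefficients. To deduce \eqref{Su} I would verify, by comparing the expansions at two nearby points $Z,Z'\in\Gamma$, the Taylor-compatibility hypotheses of Whitney's extension theorem, which then yield $u/U_0\in C^{k+1,\alpha}_{xr}(\Gamma\cap B_{1/2})$. I expect this to be the main obstacle: the ``coordinates'' $(x,r)$ are attached to the curved edge $\Gamma$, and sliding the base point along $\Gamma$ is not a genuine $C^{k+2,\alpha}$ change of variables on $\R^{n+1}$ (the naive flattening costs one derivative), so only the Whitney viewpoint, which tolerates errors of order $|Z-Z'|^{k+1+\alpha}$, avoids this loss. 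Finally, \eqref{SDu} follows from \eqref{Su}: using $\nabla_x U_0=(U_0/2r)\nu$ one has
\begin{equation*}
\frac{\nabla_x u}{U_0/r}=\frac{\nu}{2}\,\frac{u}{U_0}+r\,\nabla_x\!\left(\frac{u}{U_0}\right),
\end{equation*}
and the tangent polynomial of the right-hand side at each $Z$ is obtained from $P_Z$ by degree-preserving algebraic operations, using $\nabla_x r=(d/r)\nu$ to expand the gradient on the right.
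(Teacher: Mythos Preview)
Your Steps 1 and 2 are essentially the paper's approach: the flat model plus a dyadic ``improvement of approximation'' lemma driven by compactness toward the flat case is exactly Proposition~\ref{PSchauder} and Lemma~\ref{Imp}. The paper writes down the algebra of $\Delta(U_0 x^\mu r^m)$ explicitly to define the notion of \emph{approximating polynomial} and to quantify the error, but the skeleton matches yours.

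Step 3, however, contains a misconception and a genuine gap. For the misconception: reread the definition of $C^{k,\alpha}_{xr}(K)$ in Section~2. It is \emph{only} the supremum over $Z\in K$ of the pointwise norms $\|\cdot\|_{C^{k,\alpha}_{xr}(Z)}$; no Whitney-type compatibility between the tangent polynomials at different points is required. So once Step 2 gives a uniformly bounded pointwise expansion at every $Z\in\Gamma\cap B_{1/2}$, estimate \eqref{Su} is immediate --- there is no ``main obstacle'' here. (The paper does invoke Whitney's extension theorem, but only in the proof of Theorem~\ref{thin}, for an entirely different purpose: to regularize a function defined on $\Gamma$ into a $C^{k+2,\alpha}$ function on $\R^n$.)

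The gap is your derivation of \eqref{SDu}. The identity $\dfrac{\nabla_x u}{U_0/r}=\dfrac{\nu}{2}\dfrac{u}{U_0}+r\,\nabla_x\!\left(\dfrac{u}{U_0}\right)$ is correct, but knowing that $u/U_0$ is pointwise $C^{k+1,\alpha}_{xr}$ on $\Gamma$ does \emph{not} tell you anything about the actual gradient $\nabla_x(u/U_0)$: an $L^\infty$ bound $|u/U_0-P_0|\le C|X|^{k+1+\alpha}$ does not differentiate to a bound on $\nabla_x(u/U_0-P_0)$ without going back to the equation. The paper handles this by interior estimates at each dyadic scale (Lemma~\ref{lg} and the discussion in Section~5.2): one rescales the error $u-U_0P_0$ from $B_\lambda$ to $B_1$, observes that the rescaled right-hand side is bounded (and in fact H\"older on $\mathcal P$ thanks to the specific form $F=(U_0/r)h$), and then applies standard interior/boundary gradient estimates in the conical region $\{r\ge |x'|\}$. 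This is where the assumption $f\in C^{k,\alpha}_{xr}$ (rather than merely the hypotheses of Proposition~\ref{PSchauder}) is actually used; your purely algebraic route does not see this and cannot close.
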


 The Theorem above states that $u$ satisfies the following expansion at $0 \in \Gamma$
$$ u(X)= U_0(X) \;(P_0(x,r) + O(|X|^{k+1+\alpha})),$$ for some polynomial $P_0(x,r)$ of degree $k+1$. The derivatives $u_i$ are in fact obtained by differentiating formally this expansion in the $x_i$ direction. Using \eqref{dr}-\eqref{dr1} we have
\begin{equation}\label{form}
\nabla_x u = \frac{U_0}{r} \left [ \frac 12 P_0 \, \nu + r  \, \p_x P_0 + (\p_r P_0) \, d \, \nu + O(|X|^{k+1+\alpha}) \right].
\end{equation}
 Since $\nu, d \in C_x^{k+1,\alpha}$ we obtain
$$u_i= \dfrac{U_0}{r}(P_0^i(x,r) + O(|X|^{k+1+\alpha})), \quad \quad  \deg P_0^i=k+1,$$
for some polynomial $P_0^i$.

The boundary Harnack estimate for harmonic functions in slit domains with Lipschitz boundary (in $\R^n$) states that the quotient of two positive solutions which vanish continuously on $\mathcal P$ is H\"older continuous (see \cite{CFMS}). Theorem \ref{Schauder} can be understood as an optimal boundary Harnack estimate in the case when the boundary of the slit $\Gamma$ has higher $C^{k,\alpha}$ regularity.

We prove Theorem \ref{Schauder} in Section \ref{s5} for the case $k=0$ and in Section \ref{s6} for general $k$. We mention that the theorem holds also for $k=-1$, i.e. when $\Gamma \in C^{1,\alpha}$, see \cite{DS3}.

\begin{rem} In Section 5,
we will show also that if $f$ is more regular away from $\Gamma$, say such that it guarantees the existence of second derivatives of $u$ locally, then also $u_{ij}$ are obtained by formally differentiating the expression above. In particular if $f$ is a $C^\alpha$ function in the $X$ variable in the whole $B_1,$ then
$$ u_{ij}= \dfrac{U_0}{r^3}(P_0^{ij}(x,r) + O(|X|^{k+2+\alpha})),$$ with $P_0^{ij}$ a sum of monomials with degrees between $1$ and $k+2.$

In the case of harmonic functions  ($f \equiv 0$) we can obtain all derivatives of order $|\mu| \leq k+2$ by differentiating formally,
$$ D_x^\mu u = \dfrac{U_0}{r^{2|\mu| -1}}(P_0^{\mu}(x,r) + O(|X|^{k+|\mu|+\alpha})),$$ with $P_0^\mu$ having monomials with degrees between $|\mu| -1$ and $k+|\mu|.$

\end{rem}

We also state the polynomial expansion near $\Gamma$ for general harmonic functions (not necessarily even) in slit domains with $C^{k+2,\alpha}$ boundary, since it is of interest on its own. The expansion involves the first two harmonic functions in 2D i.e. $$U_0=r^\frac 12 \cos ( \theta / 2 ) \quad \mbox{even}, \quad \quad x_{n+1}=r \sin \theta \quad \quad \mbox{odd}$$
 multiplied by powers of $r$. Precisely we have
\begin{thm}[Expansion of harmonic functions]\label{exp}
Assume $\Gamma \in C^{k+2,\alpha}$, $k \ge 0$, and $u \in C(B_1)$ satisfies
$$\Delta u =0 \quad \mbox{in} \quad B_1 \setminus \mathcal P, \quad \quad u=0 \quad \mbox{on} \quad \mathcal P.$$ There exist functions
$$a_j(x) \in C^{k+1-j,\alpha}\quad \mbox{and} \quad \quad b_j(x) \in C^{\infty},$$ such that for all $X \in B_{1/2}$
$$\left |u(X)-  U_0 \left( \sum_{j=0}^{k+1} a_j(x)\, \, r^j \right )-x_{n+1} \left( \sum_{j\le k/2} b_j(x)\, \, x_{n+1}^{2j} \right) \right | \le M \, U_0 \, r^{k+1+\alpha} .$$
 The constant $M$ and the norms of $a_j$, $b_j$ depend on $\|u\|_{L^\infty}$, $\|\Gamma\|_{C^{k+2,\alpha}}$, $k$, $\alpha$ and $n$.
\end{thm}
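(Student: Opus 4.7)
The plan is to split $u$ into its even and odd parts in $x_{n+1}$ and treat the two pieces separately, since the odd part is controlled by classical elliptic regularity while the even part is precisely the object of Theorem~\ref{Schauder}. Writing $u=u_{ev}+u_{od}$, both summands are harmonic in $B_1\setminus\mathcal P$ and vanish on $\mathcal P$; moreover $u_{od}$ vanishes on the entire hyperplane $\{x_{n+1}=0\}$ by oddness, so the slit is no longer singular for $u_{od}$.

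For the odd part I would first show that $u_{od}$ is in fact harmonic throughout $B_1$: comparing $u_{od}$ with the harmonic extension of its boundary values on $\partial B_1$ (which is also odd in $x_{n+1}$ and therefore vanishes on $\{x_{n+1}=0\}$), uniqueness of the Dirichlet problem on each of the two half-balls forces the two functions to coincide. Hence $u_{od}\in C^\infty(B_1)$, and writing $u_{od}(X)=x_{n+1}\,v(X)$ with $v$ smooth and even in $x_{n+1}$, a Taylor expansion of $v$ in the variable $x_{n+1}$ yields smooth functions $b_j(x)$ with $v(X)=\sum_{j\le k/2}b_j(x)\,x_{n+1}^{2j}+O(x_{n+1}^{k+2})$.

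For the even part I apply Theorem~\ref{Schauder} with $f\equiv 0$ to obtain $u_{ev}/U_0\in C^{k+1,\alpha}_{xr}(\Gamma\cap B_{1/2})$ with uniform norm. This supplies, at each $Z=(z,0)\in\Gamma$, a tangent polynomial $P_Z(x-z,r)=\sum_{|\mu|+m\le k+1}c_{\mu m}(Z)(x-z)^\mu r^m$ satisfying $|u_{ev}/U_0(X)-P_Z(x-z,r)|\le C|X-Z|^{k+1+\alpha}$. To convert this family of pointwise expansions into a single global one with error $O(r^{k+1+\alpha})$, I would evaluate at the nearest-point projection $Z=\Pi(X)\in\Gamma$, for which $|X-Z|=r(X)$, and then regroup the result by powers of $r$, defining $a_j(x):=\sum_{\mu}c_{\mu j}(\Pi(x))(x-\Pi(x))^\mu$.

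The main obstacle is to justify the regularity claim $a_j\in C^{k+1-j,\alpha}$. Because the $C^{k+1,\alpha}_{xr}$ estimate is only pointwise, the coefficients $c_{\mu m}(Z)$ are a priori defined merely point-by-point along $\Gamma$, and to promote them to functions of the correct class I would invoke Whitney's extension theorem. The required compatibility conditions follow from the triangle inequality applied at two nearby points $Z_1,Z_2\in\Gamma$, namely $|P_{Z_1}(x-z_1,r)-P_{Z_2}(x-z_2,r)|\lesssim|X-Z_1|^{k+1+\alpha}+|X-Z_2|^{k+1+\alpha}$; a monomial-by-monomial reading then shows that each $c_{0,j}$ extends as a $C^{k+1-j,\alpha}$ function on $\R^n$. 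Combined with the $C^{k+2,\alpha}$ regularity of $\Pi$ and the fact that $(x-\Pi(x))^\mu$ vanishes to order $|\mu|$ on $\Gamma$, this yields the claimed $C^{k+1-j,\alpha}$ smoothness of $a_j$. A final check that the odd-part remainder $O(x_{n+1}^{k+2})$ fits inside the uniform bound $M\,U_0\,r^{k+1+\alpha}$ on $B_{1/2}$ closes the proof.
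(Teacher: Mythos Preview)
Your overall strategy matches the paper's: decompose $u=u_{ev}+u_{od}$, use that $u_{od}$ extends harmonically across $\{x_{n+1}=0\}$ (hence is $C^\infty$), and apply Theorem~\ref{Schauder} (equivalently Proposition~\ref{PSchauder}) to $u_{ev}$ at every point of $\Gamma$, then pass from the family of tangent polynomials $\{P_Z\}_{Z\in\Gamma}$ to a single expansion with coefficients $a_j$ via the compatibility of nearby $P_Z$'s. The paper states this last step as ``using the arguments of Remark~\ref{r1}'' and Remark~\ref{r1*}; your Whitney extension idea is the natural way to make this precise.

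There is, however, a genuine gap in your execution of the $a_j$ regularity step. Your explicit formula
\[
a_j(x):=\sum_{\mu}c_{\mu j}(\Pi(x))\,(x-\Pi(x))^\mu
\]
does \emph{not} in general yield $a_j\in C^{k+1-j,\alpha}$. Since $x-\Pi(x)=d(x)\nu(x)$, only the multi-indices $\mu$ aligned with $\nu$ survive, and the term of normal degree $p$ is $B_p(\Pi(x))\,d(x)^p$ with $B_p\in C^{k+1-p-j,\alpha}(\Gamma)$. A product $(\text{$C^{k+1-p-j,\alpha}$ function of }\Pi(x))\cdot d(x)^p$ is only $C^{k+1-p-j,\alpha}$; the vanishing of $d^p$ on $\Gamma$ does \emph{not} raise tangential regularity (e.g.\ if $\Gamma=\{x_n=0\}$ and $f(x')$ is merely $C^\alpha$, then $f(x')x_n$ is not $C^{1,\alpha}$). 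Also, $\Pi$ is only $C^{k+1,\alpha}$, not $C^{k+2,\alpha}$.

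The fix is to apply Whitney's theorem to the \emph{full} $(k+1-j)$-jet rather than only to $c_{0,j}$: at each $Z\in\Gamma$ prescribe the polynomial
\[
T^{(j)}_Z(x):=\sum_{|\mu|\le k+1-j}c_{\mu j}(Z)\,(x-z)^\mu .
\]
Your triangle-inequality bound $|P_{Z_1}-P_{Z_2}|\lesssim|Z_1-Z_2|^{k+1+\alpha}$ on a ball of radius $\sim|Z_1-Z_2|$, read coefficient-by-coefficient first in $r$ and then in $x$, gives exactly Whitney's compatibility conditions for $\{T^{(j)}_Z\}$ at level $k+1-j$. Whitney then produces $a_j\in C^{k+1-j,\alpha}(\R^n)$ whose Taylor polynomial at each $Z\in\Gamma$ is $T^{(j)}_Z$, and since $|x-\Pi(x)|\le r$ one gets $\bigl|\sum_j a_j(x)r^j-P_{\Pi(x)}(x,r)\bigr|\le C r^{k+1+\alpha}$, as required. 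This is the content behind the paper's Remarks~\ref{r1}--\ref{r1*}, which it leaves to the reader.
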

The first and second term above approximate the even respectively odd part of $u$. More generally we will show that if
$$\Delta u=F \quad \mbox{in} \quad B_1 \setminus \mathcal P, \quad \quad F \in C^{k,\alpha},$$
then
$$u=U_0 \left(\sum_{m=0}^{k+1}a_m(x)r^m +O(r^{k+1+\alpha})\right) + x_{n+1}\left (\sum_{m =0}^k b_m(x) x_{n+1}^m \right ) ,$$
for functions $a_m \in C^{k+1-m,\alpha}$, $b_m \in C^{k-m, \alpha +\frac12}$.

\section{The thin one-phase problem}

In this Section we show that our main Theorem \ref{MaI} follows from Theorem \ref{Schauder} and a Schauder estimate for a Neumann-type problem which we also state here.

Assume $u \in C(B_1)$ is a solution to the thin one-phase free boundary problem
\begin{equation}\label{TOF}
\begin{cases}
\Delta u = 0 \quad \quad \text{in $B_1 \setminus \mathcal P$}, \quad \quad u=0 \quad \mbox{on $\mathcal P$,}\\
\frac{\partial u}{\partial U_0}=1 \quad \text{on $\Gamma$,}
\end{cases}
\end{equation}
where
$$\frac{\partial u}{\partial U_0}(Z):=\lim_{t \to 0^+} \frac{u(z+t \nu,0)}{t^{1/2}}, \quad \quad Z=(z,0)\in \Gamma.$$

We assume that $\Gamma \in C^{2,\alpha}$ satisfies \eqref{G}, and after replacing $u$ by its even part, we also assume that $u$ is even in $x_{n+1}$. By Theorem \ref{Schauder}, at a point $Z \in \Gamma$ we have the expansion
$$u(X)=U_0(X) \left(P_Z(x,r) +O(|X-Z|^{1+\alpha}) \right), \quad \quad \deg P=1.$$
Notice that
$$P_Z(z,0)=\frac{\partial u}{\partial U_0}(Z).$$

\subsection{Equation for the quotient $w$}
We show that the quotient $$w:=\frac{u_i}{u_n}, \quad \quad w\in C(B_1)$$
satisfies the following problem with Neumann boundary condition on $\Gamma$:
\begin{equation}\label{NeP}
\begin{cases}
\Delta (u_n w)=0 \quad \quad \mbox{in} \quad B_1 \setminus \mathcal P, \\

w_\nu=0 \quad \quad \quad \mbox{on $\Gamma$,}
\end{cases}
\end{equation}
with
$$w_\nu(Z):=\lim_{t \to 0^+} \frac{w(z+t \nu,0)-w(z,0)}{t}, \quad \quad Z \in \Gamma.$$
Notice that $w$ represents the derivative in the $-e_i$ direction of the level sets of $u$ viewed as graphs in the $e_n$ direction. In particular on $\Gamma$
\begin{equation}\label{wg}
w(Z)=-g_i(z'),
\end{equation}
and this gives the relation between the regularity of $w$ on $\Gamma$ and the regularity of $\Gamma$ itself.

First, we remark that $w$ is indeed continuous in $B_1$. In fact from \eqref{form} it follows that $u_n>0$ is a neighborhood of zero. After a dilation we can assume that this is true in $B_1.$ Now, 
again from \eqref{form} we conclude that $w$ is continuous on $\Gamma$, and boundary Harnack inequality gives the continuity of $w$ on the slit $\mathcal P$.

Next we check the Neumann condition for $w$. Let
$$P_0(x,r)=a_0+ a_ix_i + a_{n+1}r$$ be the polynomial in the expansion of $u$ at $0$. From the free boundary condition we find, $$1=P_0(z,0) +O(|z|^{1+\alpha}), \quad \quad Z \in \Gamma $$ thus, using that $\nabla_{x'} g(0)=0,$ we get
$$a_0=1, \quad a_i=0 \quad 1 \le i \le n-1.$$
By  \eqref{form} we see that on the line $t e_n$, $\nu=e_n$ hence
$$u_i (t e_n) = t^{-\frac 12} O(t^{1+\alpha}) , \quad \quad 1 \le i \le n-1,$$
which gives
$$(t^ \frac 12 u_i)(0)=\frac{d}{dt}(t^\frac 12 u_i)(0)=0.$$
For any vector $\tau=(\tau_1,..,\tau_n) \in \R^n$, $\tau_n \ne 0$ we obtain that 
$$\frac{d}{dt}\left[\log (t^ \frac 12u_\tau)\right](0) \quad \quad \mbox{does not depend on $\tau$},$$
and it follows that for any two vectors in $\R^n$, $\tau$, $\sigma$, with $\tau_n \ne 0$ we have
$$ \left(\frac {u_\sigma}{u_\tau}\right)_{e_n}(0)=0.$$

Thus $w$ solves the Neumann problem \eqref{NeP}.
We will prove the following estimate for solutions to such Neumann problem.

\begin{thm}\label{thin}Let $\Gamma \in C^{k+2,\alpha}$ satisfy \eqref{G} and let $u$ be a harmonic function in $B_1 \setminus \mathcal P$, even in $x_{n+1}$, such that $\frac 12 U_0 \le u \le 2 U_0$. Assume $w \in C(B_1)$, even in $x_{n+1}$, solves the Neumann problem
$$\begin{cases}
\Delta(u_n w) = 0 \quad \text{in $B_1 \setminus \mathcal P,$}\\
w_{\nu} =0 \quad \quad \quad \text{on $\Gamma.$}
\end{cases}$$
Then $w \in C_{x,r}^{k+2, \alpha}(\Gamma)$ and
$$\|w\|_{C_{x,r}^{k+2, \alpha}(\Gamma\cap B_{1/2})} \le C \|w\|_{L^\infty(B_1)},$$
with $C$ depending only on $n,k, \alpha$.
\end{thm}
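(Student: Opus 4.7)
The plan is to prove the pointwise estimate $w\in C^{k+2,\alpha}_{xr}(Z)$ uniformly at every $Z\in \Gamma$, by a Caffarelli--Cabré style perturbation/iteration argument analogous to the one that yields Theorem~\ref{Schauder}. After translating $Z$ to the origin and rotating so that $\Gamma$ is tangent to $\{x_n=0,\,x_{n+1}=0\}$ there, the expansion of $u$ furnished by Theorem~\ref{Schauder} gives $u=U_0(1+O(|X|^{1+\alpha}))$ and, using the remark following that theorem, an analogous expansion of $u_n$ in the $(x,r)$ variables. Writing the equation in non-divergence form
$$\Delta w + 2\,\frac{\nabla u_n}{u_n}\cdot \nabla w = 0,$$
the coefficient vector $\nabla u_n/u_n$ becomes, in the frozen limit $u=U_0$ on a flat slit, a universal singular coefficient depending only on $r$ and the signed distance $d$.

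The first technical step is to analyze this model equation: classify the polynomial solutions $Q(x,r)$, of degree at most $k+2$, of the degenerate model equation that satisfy the Neumann condition $Q_{x_n}=0$ on the flat $\Gamma_0$. These form a finite-dimensional space of admissible $(k+2)$-jets. The heart of the argument is then an \emph{approximation lemma}: if $w$ satisfies the hypotheses in $B_1$ and the configuration $(u,\Gamma)$ is close enough to the flat/model configuration $(U_0,\Gamma_0)$, then there exists a model polynomial $Q$ of degree $k+2$ with $\|Q\|\le C\|w\|_{L^\infty}$ such that
$$\sup_{B_\rho}|w-Q(x,r)|\le \rho^{k+2+\alpha}\|w\|_{L^\infty}$$
for a universal $\rho\in(0,1/2)$. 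One proves this by compactness and contradiction: rescaled counterexamples $w_j$ extract (by the uniform Hölder regularity up to $\Gamma$ inherited from the equation) a limit $w_\infty$ solving the model Neumann problem, and a standard polynomial-approximation/Liouville property of the model equation provides the required $Q$, contradicting the failure of approximation at scale $\rho$.

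Iterating this lemma on dyadic balls $B_{\rho^j}$ around a fixed $Z\in\Gamma$, using the $C^{k+2,\alpha}$ regularity of $\Gamma$ to quantify at each scale the deviation between $(u,\Gamma)$ and its frozen model, produces a unique polynomial $Q_Z(x,r)$ of degree $k+2$ with
$$|w(X)-Q_Z(x,r)|\le C|X-Z|^{k+2+\alpha},$$
which is precisely the pointwise statement $w\in C^{k+2,\alpha}_{xr}(Z)$ with the required bound. At each rescaling step one needs to recenter at $Z$ and straighten the local piece of $\Gamma$ while preserving the structure of the equation, namely the roles of $r$ and $\theta$ measured from $\Gamma$.

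The main obstacle is exactly this straightening: the natural change of variables that flattens $\Gamma$ and transports the perpendicular 2-planes isometrically is only $C^{k+1,\alpha}$ when $\Gamma\in C^{k+2,\alpha}$, so a naive global change of coordinates loses one derivative and the iteration seems to stop one step short. I plan to circumvent this by producing the jets $Q_Z$ purely from the pointwise approximation lemma, and then invoking \emph{Whitney's extension theorem} along $\Gamma$ to upgrade the family $\{Q_Z\}_{Z\in\Gamma}$ into the uniform $C^{k+2,\alpha}_{xr}(\Gamma\cap B_{1/2})$ bound. The Whitney compatibility of $Q_Z$ and $Q_{Z'}$ for nearby $Z,Z'\in\Gamma$ is obtained by applying the pointwise expansion of $w$ at both centers and matching the two representations on the overlap. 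This Whitney step, which trades a global change of variables for a purely local pointwise construction, is the technical heart of the argument, exactly as signaled in the introduction.
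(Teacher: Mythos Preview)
Your overall strategy (compactness plus dyadic iteration, as in Proposition~\ref{PSchauder}) is the right one and coincides with the paper's. The gap is in the choice of approximating object in the iteration and, correspondingly, in where Whitney's theorem enters.

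You propose to approximate $w$ at each scale by a \emph{model polynomial} $T(x,r)=Q(x')+rP(x,r)$ satisfying the flat Neumann problem, and then to rescale $w-T$. But for the iteration to close you need the rescaled error $\tilde w(X)=\lambda^{-(k+2+\alpha)}(w-T)(\lambda X)$ to again satisfy a Neumann condition $|\tilde w_{\tilde\nu}|\le\epsilon$ on the rescaled (still curved) $\tilde\Gamma$. Since $w_\nu=0$, this amounts to $|T_\nu|\le\epsilon\,\lambda^{k+1+\alpha}$ on $\Gamma\cap B_\lambda$. For the tangential part $Q(x')$ one only has $Q_\nu=\sum_{i<n}\nu^i\partial_iQ=O(\delta|x|)$ on the curved $\Gamma$ (using $\nabla g(0)=0$), so after rescaling the Neumann data is of size $\delta\lambda^{-(k+\alpha)}$, which blows up as $\lambda\to0$. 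The same obstruction appears in the interior equation: $\Delta(u_n T)$ contains $\nabla u_n\cdot\nabla Q$, whose singular part $\sim \tfrac{U_0}{r^2}\,\nu\cdot\nabla Q$ is controlled only like $\tfrac{U_0}{r}\cdot\delta$, not like $\tfrac{U_0}{r}|X|^{k+\alpha}$. So the iteration does not close with a plain polynomial in $(x,r)$.

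This is precisely the ``loss of one derivative'' you identified, but the cure is not to invoke Whitney \emph{after} the iteration to glue jets. (In fact no gluing is needed: by definition, $w\in C^{k+2,\alpha}_{xr}(\Gamma\cap B_{1/2})$ is just a uniform pointwise bound, and the compatibility of $Q_Z$ with $Q_{Z'}$ is the elementary Remark~\ref{r1}.) The paper uses Whitney \emph{inside} the iteration: the approximant is $W_{Q,P}=E(Q)+\tfrac{U_0}{u_n}P$, where $Q$ is a polynomial in the tangential coordinates $y'$ adapted to $\Gamma$ (so $Q_\nu\equiv0$ on $\Gamma$ exactly). As a function of $x$, $Q(y')$ is only $C^{k+1,\alpha}$, so $\Delta Q$ is too rough; the Whitney extension $E(Q)$ restores $C^{k+2,\alpha}_x$ regularity while keeping $\partial_\nu E(Q)=0$ on $\Gamma$. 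Lemma~\ref{Ma} then shows $\Delta(u_n E(Q))=\tfrac{U_0}{r}(R+O(|X|^{k+\alpha}))$ for a polynomial $R$, and one chooses $P$ so that $\Delta(U_0P)$ cancels $R$ (the ``approximating pair'' condition). With this $W_{Q,P}$ one gets $\partial_\nu W_{Q,P}=O(|X|^{k+1+\alpha})$ and $\Delta(u_n W_{Q,P})=O(\tfrac{U_0}{r}|X|^{k+\alpha})$, and the iteration closes exactly as in Lemma~\ref{Imp}. Your plan will work once you replace $T(x,r)$ by such a $W_{Q,P}$; the ``Whitney at the end'' step can then be dropped.
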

Clearly the function $w$ is a $C^{k+2,\alpha}$ function when restricted to $\Gamma$.

\

Theorem \ref{MaI} is a direct corollary of Theorem \ref{Schauder} and Theorem \ref{thin}. Indeed, if $u$ is a solution to the thin one-phase problem and $\Gamma \in C^{k+2,\alpha}$, then by Theorem \ref{Schauder} and the free boundary condition, $u$ satisfies the assumptions of Theorem \ref{thin} (after a dilation.) We then apply Theorem \ref{thin} to the quotient $w=u_i/u_n$ and obtain (see \eqref{wg}) that in fact $\Gamma \in C^{k+3,\alpha}$.

\begin{rem}\label{rf}
From the proof of Theorem \ref{thin} it follows that the conclusion holds if the homogenous Neumann condition is replaced by $w_\nu \in C^{k+1,\alpha}$ on $\Gamma$.
\end{rem}

\begin{rem}
Theorems \ref{Schauder} and \ref{thin} apply also in the case $k=-1$, that is when $\Gamma \in C^{1,\alpha}$ (see \cite{DS3}). However the Neumann condition for $w$ cannot be justified in this case. This is the main reason why we require initially $\Gamma \in C^{2,\alpha}$.
\end{rem}

\subsection{General setting}
Assume $u\in C(B_1)$ satisfies the thin one-phase problem with general free boundary condition
\begin{equation}\label{TOG}
\begin{cases}
\Delta u = 0 \quad \quad \text{in $B_1 \setminus \mathcal P$}, \quad \quad u=0 \quad \mbox{on $\mathcal P$,}\\
\frac{\partial u}{\partial U_0}=G(z) \quad \text{on $\Gamma$,}
\end{cases}
\end{equation}
with $G>0$, $G \in C^{k+2,\alpha}$.

If $\Gamma \in C^{k+2,\alpha}$ then the quotient $w=u_i/u_n$ satisfies a Neumann condition
$$w_\nu= h  \quad \quad \quad \mbox{on $\Gamma$},$$ for some $h \in C^{k+1,\alpha}$ depending on $\nu$, $G$ and the derivatives of $G$.

Indeed, as above, at the origin we find ($1 \le i \le n-1$)
$$a_0=G, \quad a_i=G_{z_i},$$
where $G$ and its derivatives are evaluated at $0$. Then on the line $te_n$ we obtain
$$(t^ \frac 12 u_i)(0)=0, \quad (t^\frac 12 u_i)_{e_n}(0)=G_{z_i} \quad \quad (t^ \frac 12 u_n)(0)=G(0)/2,$$
and now it is straightforward to obtain the dependence of $h$ on $\nu$, $G$, $\nabla G$.
Using Remark \ref{rf} we obtain optimal regularity of the free boundary in problem \eqref{TOG}.

\begin{prop}\label{thin0} Assume $u$ satisfies \eqref{TOG} for a positive $G \in C_x^{k+2,\alpha}$, for some $k \ge 0$ and $\alpha \in (0,1)$. If $\Gamma \in C^{2,\alpha}$ then $\Gamma \in C^{k+3,\alpha}$.
\end{prop}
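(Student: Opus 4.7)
The proof is a bootstrap argument running from $\Gamma \in C^{2,\alpha}$ up to $\Gamma \in C^{k+3,\alpha}$ in $k+1$ steps of one derivative each, combining Theorem \ref{Schauder} for $u$ with the Neumann estimate Theorem \ref{thin} for the quotient $w = u_i/u_n$, applied in its inhomogeneous form (Remark \ref{rf}).

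The inductive step is as follows. Assume $\Gamma \in C^{j+2,\alpha}$ for some $0 \le j \le k$. Since $\Gamma \in C^{j+2,\alpha}$, Theorem \ref{Schauder} with parameter $j$ produces the expansion $u = U_0(P_Z(x,r) + O(|X-Z|^{j+1+\alpha}))$ at every $Z \in \Gamma$, where the leading polynomial coefficient is prescribed by the free boundary condition to be $G(Z)$. By the expansion formula \eqref{form}, $u_n$ is positive and comparable to $U_0/r$ in a fixed neighborhood of $\Gamma$, so after multiplying $u$ by a suitable constant and rescaling, the hypotheses of Theorem \ref{thin} (namely $\frac12 U_0 \le u \le 2U_0$) are met. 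The pointwise computation at the origin reproduced just before the statement of Proposition \ref{thin0} shows that $w_\nu = h$ on $\Gamma$ where, at every $Z \in \Gamma$, $h(Z)$ is an explicit algebraic combination of $\nu(Z)$, $G(Z)$ and $(\nabla G)(Z)$. Because $\nu \in C^{j+1,\alpha}$ (from $\Gamma \in C^{j+2,\alpha}$) and $\nabla G \in C^{k+1,\alpha} \subset C^{j+1,\alpha}$ (as $j \le k$), we conclude $h \in C^{j+1,\alpha}(\Gamma)$.

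Applying Remark \ref{rf}, i.e.\ Theorem \ref{thin} with inhomogeneous Neumann data $h \in C^{j+1,\alpha}$, yields $w \in C^{j+2,\alpha}_{xr}(\Gamma)$, hence $w|_\Gamma \in C^{j+2,\alpha}$ in the usual sense. The identity $w(Z) = -g_i(z')$ for $1 \le i \le n-1$, recorded in \eqref{wg}, continues to hold in the general setting because $w$ still encodes the $i$-derivative of the graph description of $\Gamma$ (this follows from the leading term of \eqref{form} combined with $\nu_i/\nu_n = -g_i$). Therefore $g_i \in C^{j+2,\alpha}$ for each $i = 1,\dots,n-1$, so $g \in C^{j+3,\alpha}$ and $\Gamma \in C^{j+3,\alpha}$. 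Iterating from $j=0$ up to $j=k$ produces $\Gamma \in C^{k+3,\alpha}$.

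The main obstacle is the one already flagged in the discussion preceding Proposition \ref{thin0}: carefully verifying the Neumann condition $w_\nu = h$ at every point of $\Gamma$ with the correct regularity, rather than merely at the origin. This requires propagating the ODE-type computation along $te_n$ to arbitrary $Z \in \Gamma$ in the direction $\nu(Z)$, and extracting $h(Z)$ as a fixed rational expression in $\nu(Z)$, $G(Z)$, $(\nabla G)(Z)$; the uniform pointwise expansion of Theorem \ref{Schauder} is what controls the remainder terms and legitimizes taking the directional derivative of $w$ transversally to $\Gamma$.
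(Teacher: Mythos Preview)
Your proof is correct and follows the same approach as the paper: the bootstrap $\Gamma\in C^{j+2,\alpha}\Rightarrow\Gamma\in C^{j+3,\alpha}$ via Theorem~\ref{Schauder}, the Neumann condition $w_\nu=h\in C^{j+1,\alpha}$ derived from the expansion coefficients $a_0=G$, $a_i=G_{z_i}$, and then Theorem~\ref{thin} in the inhomogeneous form of Remark~\ref{rf} together with \eqref{wg}. The paper leaves the iteration implicit, merely recording the single step and invoking Remark~\ref{rf}; your write-up makes the induction explicit and correctly tracks that the limiting regularity of $h$ at each stage comes from $\nu\in C^{j+1,\alpha}$ rather than from $G$.
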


\subsection{Constant coefficients}

 We prove our theorems using the estimates for the ``constant coefficients" case together with perturbation arguments. Precisely, Theorems \ref{Schauder} and \ref{thin} rely on the following two theorems.

\begin{thm} \label{SchauderL}
Assume $\Gamma=\{x_n=0\}$ and $u\in C(B_1)$ is even, $\|u\|\le 1$ and satisfies
$$\Delta u = 0 \quad \quad \text{in $B_1 \setminus \mathcal P$}, \quad \quad u=0 \quad \mbox{on $\mathcal P$.}$$
For any $k \ge 0$, there exists a polynomial $P_0(x,r)$ of degree $k$ such that $U_0 P_0$ is harmonic in $B_1 \setminus \mathcal P$ and
$$|u-U_0 P_0| \le C |X|^{k+1} U_0,$$
for some constant $C$ depending on $k$ and $n$.
\end{thm}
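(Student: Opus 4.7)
In the flat case $\Gamma = \{x_n = 0\}$, separation of variables makes the problem explicit. The plan is to construct an explicit basis of polynomial solutions $U_0 Q(x, r)$ harmonic in $B_1 \setminus \mathcal P$, and then bound the error of the best polynomial approximation to $u$ by a dyadic compactness iteration.

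\emph{Step 1: classification of polynomial solutions.} In cylindrical coordinates $(x', r, \theta)$ with $x' = (x_1, \ldots, x_{n-1})$ and $(r, \theta)$ polar in the $(x_n, x_{n+1})$-plane, the Laplacian decomposes as $\Delta = \partial_r^2 + r^{-1}\partial_r + r^{-2}\partial_\theta^2 + \Delta_{x'}$. Evenness in $x_{n+1}$ and the Dirichlet condition on $\mathcal P$ force the angular modes to be $\cos(q\theta)$ with $q \in \{1/2, 3/2, 5/2, \ldots\}$. Writing $u = \sum_q F_q(r, x')\cos(q\theta)$, the Frobenius solution regular at $r=0$ is $F_q = r^q H_q(r, x')$ with
$$r H_q'' + (2q+1) H_q' + r\Delta_{x'} H_q = 0,$$
so $H_q = \sum_{m \ge 0} \phi_{q, m}(x') r^m$ with recursion $\phi_{q, m+1} = -\Delta_{x'}\phi_{q, m}/[(m+1)(2q+2m+1)]$; this terminates whenever $\phi_{q, 0}$ is polynomial. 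A Chebyshev-type identity gives $r^q \cos(q\theta) = U_0 \cdot T_q(x_n, r)$ with $T_q$ a polynomial of degree $q - 1/2$ (e.g.\ $T_{1/2} = 1$, $T_{3/2} = 2x_n - r$), so each such homogeneous harmonic solution has the form $U_0 \cdot Q(x, r)$ with $Q$ polynomial. Let $\mathcal H_k$ denote the finite-dimensional space of polynomials $P$ of degree $\le k$ with $U_0 P$ harmonic; the preceding construction gives an explicit basis.

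\emph{Step 2: dyadic iteration.} I would then establish, by induction on $j$, the existence of $P_j \in \mathcal H_k$ with
$$\|u - U_0 P_j\|_{L^\infty(B_{\rho_j})} \le C \rho_j^{k + 3/2}\|u\|_{L^\infty(B_1)}, \qquad \rho_j = 2^{-j},$$
together with $\|P_{j+1} - P_j\| \le C \rho_j^{k+1}$, so that $P_j \to P_0 \in \mathcal H_k$ with the claimed bound. The one-step improvement is a compactness argument by contradiction: the renormalized functions $\tilde u_j(X) := [u(\rho_j X) - U_0 P_j(\rho_j X)] / \rho_j^{k+3/2}$ would have unit $L^\infty$-norm on $B_1$, be $L^2(\partial B_1)$-orthogonal to $U_0 \cdot \mathcal H_k$, and (via interior harmonic estimates and the boundary Harnack principle in slit domains) subconverge to a global harmonic function vanishing on $\mathcal P$, even in $x_{n+1}$, with growth strictly less than $k + 3/2$ at infinity. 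By Step 1 such a limit is itself in $U_0 \cdot \mathcal H_k$, contradicting the orthogonality. The pointwise conclusion $|u - U_0 P_0| \le C|X|^{k+1} U_0$ follows by comparing $u - U_0 P_0$ with $U_0$ at each dyadic scale using boundary Harnack.

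The main difficulty lies in the compactness step: the blow-up limit must vanish on $\mathcal P$ at the correct $U_0$-rate (not merely in $L^\infty$) and satisfy the correct growth bound at infinity, both uniformly across $j$. The boundary Harnack principle in slit domains (CFMS) provides the needed two-sided comparability with $U_0$, while the explicit classification of Step 1 is what finally forces any such limit to be of polynomial type and so rules out spurious homogeneities outside $U_0 \cdot \mathcal H_k$.
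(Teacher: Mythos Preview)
Your approach is sound but takes a genuinely different route from the paper. The paper exploits two elementary features of the flat case: first, translation invariance in $x'$ gives $u \in C^\infty_{x'}$ at once, so one writes $\Delta_{x_n,x_{n+1}} u = -\Delta_{x'} u =: f$ and works slice by slice in the two variables $(x_n,x_{n+1})$; second, the conformal map $z \mapsto z^2$ (with $z = x_n + i x_{n+1}$) unfolds the slit, turning the problem into $\Delta \bar u = 4|z|^2 \bar f$ with $\bar u$ vanishing on a straight line. After an odd reflection this is a standard elliptic equation in the full plane, and the ordinary Taylor expansion of $\bar u$ at the origin, read back through $z \mapsto z^2$, gives the claimed expansion directly. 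The harmonicity of $U_0 P_0$ is then checked by a short scaling argument on the homogeneous pieces. No separation of variables, no compactness, no iteration.

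Your Step~1 recovers the same polynomial solutions via Frobenius series rather than the conformal unfolding; this is correct and gives the same space $\mathcal H_k$. Your Step~2 is the compactness--iteration Schauder scheme, which the paper does employ for the \emph{curved} case (Lemma~5.1 and Section~6), precisely because the $z\mapsto z^2$ trick is unavailable there; for the flat constant-coefficient statement itself it is unnecessary machinery. One point to tighten: your description of the blow-up limit as a ``global harmonic function with growth $< k+3/2$'' requires that $\tilde u_j$ be controlled on the expanding balls $B_{\rho_j^{-1}}$, with growth at scale $R$ inherited from the inductive estimate at the earlier step $j'$ with $\rho_{j'} \sim R\rho_j$; since $P_j$ changes with $j$, you should check that the differences $P_j - P_{j'}$ (which are bounded by $\sum \rho_l^{k+1}$) do not spoil this. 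This is routine but worth making explicit. Alternatively, once Step~1 is in hand you can simply expand $u$ on $\partial B_{1/2}$ in the explicit eigenbasis and estimate the tail, bypassing the iteration entirely.
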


\begin{thm}\label{thin0L}
Assume $\Gamma=\{x_n=0\}$ and $w\in C(B_1)$, $\|w\| \le 1$ satisfies
\begin{equation}\label{w_con}
\Delta ((U_0)_n w)=0, \quad \mbox{in $B_1 \setminus P$}, \quad w_\nu=0 \quad \mbox{on $\Gamma$.}
\end{equation}

For any $k \ge 0$, there exists a polynomial $T(x,r)$ of degree $k$, of the form $$T=Q(x') + r  \, \, P(x,r),  \quad \quad \deg P=k-1,$$ such that $T$ satisfies \eqref{w_con} and
$$|w- T| \le C |X|^{k+1},$$
for some constant $C$ depending on $k$ and $n$.
\end{thm}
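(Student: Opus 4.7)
The plan is to prove Theorem \ref{thin0L} by a Campanato--Liouville compactness argument, exploiting the explicit structure of polynomial solutions available in the flat case $\Gamma = \{x_n = x_{n+1} = 0\}$.

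First I would identify the finite-dimensional space $\mathcal T_k$ of polynomials $T(x,r)$ of degree $\le k$ of the form $Q(x') + rP(x,r)$ (with $\deg P \le k-1$) that satisfy $\Delta((U_0)_n T) = 0$ together with $T_\nu = 0$ on $\Gamma$, and construct an explicit basis. The building blocks come from the 2D slit problem: restricting to $x' = 0$ and imposing $W_\nu(0) = 0$ yields as bounded solutions the constant $1$ and the polynomials $S_m(x_n, r) = 2 r^{m+1}\cos((m+\tfrac12)\theta)/\cos(\tfrac\theta 2)$ for $m \ge 1$, each a genuine polynomial of degree $m+1$ in $(x_n, r)$ of the form $rP$ (the quotient of cosines reduces to a Chebyshev-type polynomial in $\cos\theta$, and $r\cos\theta = x_n$). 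Using the $x'$-translation invariance of the equation, applying $D_{x'}^\alpha$ to these base solutions and adding $r$-dependent correction terms to restore the equation (a finite recursive procedure in the degree) produces the desired basis.

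Next I would argue by contradiction. If the conclusion fails for some $k$, there exist $w_j$ with $\|w_j\|_\infty \le 1$ and scales $r_j \to 0$ such that
$$M_j := \inf_{T \in \mathcal T_k}\|w_j - T\|_{L^\infty(B_{r_j})}\quad\text{satisfies}\quad M_j/r_j^{k+1} \to \infty.$$
A standard Campanato selection of optimal scales $r_j$ combined with the rescaling $\tilde w_j(X) := (w_j - T_j^*)(r_j X)/M_j$ yields a sequence with $\|\tilde w_j\|_{L^\infty(B_1)} = 1$, with best $\mathcal T_k$-approximation on $B_1$ equal to $1$, satisfying the equation on $B_{1/r_j}\setminus \mathcal P$ with homogeneous Neumann condition on $\Gamma$, and with polynomial growth $\|\tilde w_j\|_{L^\infty(B_\rho)} \le C\rho^{k+1}$. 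Interior regularity for the harmonic function $(U_0)_n \tilde w_j$ away from $\mathcal P$, together with the boundary expansion of Theorem \ref{SchauderL} on compact subdomains, provides local compactness; extract a subsequential uniform limit $\tilde w_\infty$ on $\R^{n+1}\setminus \mathcal P$ solving the equation with $(\tilde w_\infty)_\nu = 0$ on $\Gamma$ and $|\tilde w_\infty(X)| \le C(1+|X|^{k+1})$. A Liouville classification then identifies $\tilde w_\infty$ with an element of $\mathcal T_{k+1}$, and the inherited $L^\infty$-orthogonality to $\mathcal T_k$ places it in $\mathcal T_{k+1}\setminus \mathcal T_k$; translating back to $w_j$ this gives an $o(r_j^{k+1})$ approximation by an element of $\mathcal T_{k+1}$, contradicting $M_j/r_j^{k+1}\to\infty$.

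The main obstacle I expect is the Liouville classification of entire solutions with polynomial growth. The weighted, degenerate character of the equation --- $(U_0)_n$ vanishes on $\mathcal P$ and is singular on $\Gamma$ like $r^{-1/2}$ --- precludes a direct appeal to the classical Liouville theorem. One has to combine the polynomial $x'$-expansion of $\tilde w_\infty$ (available by $x'$-translation invariance) with the 2D slit-harmonic expansion of $(U_0)_n\tilde w_\infty$ in the $(x_n, x_{n+1})$-plane and verify that only finitely many admissible modes appear; in particular the singular mode $r^{-1/2}\cos(\theta/2)$, which corresponds to a constant in $w$, must be included in the basis, and the interaction between the $x'$- and 2D expansions must be controlled by the growth bound at infinity.
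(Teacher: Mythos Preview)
Your approach via Campanato iteration and a Liouville classification is genuinely different from the paper's and could in principle be completed, but it is substantially more work than what the paper does. The paper bypasses the blow-up/Liouville machinery entirely by reducing the Neumann problem to the Dirichlet Schauder estimate already proved. In the flat case $(U_0)_n = U_0/(2r)$, and after showing (via barriers) that $|w(X)-w(x',0,0)|\le Cr$, one sets $\bar w := w - w(x',0,0)$ and observes that $v := (U_0/r)\bar w$ vanishes continuously on all of $\mathcal P$ and satisfies $\Delta v = (U_0/r)\,f(x')$ with $f = -\Delta_{x'} w(x',0,0)\in C^\infty$. Theorem~\ref{Schauder} applied to $v$ then yields $\bar w/r = \bar P(x,r) + O(|X|^k)$ directly, and Taylor expanding the smooth trace $w(x',0,0)$ supplies the $Q(x')$ term. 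The explicit basis of $\mathcal T_k$ you propose to build, and the Liouville theorem you flag as the main obstacle, are thus obtained for free from the Dirichlet theory.

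There is also a soft spot in your compactness step: you invoke Theorem~\ref{SchauderL} for the harmonic function $(U_0)_n \tilde w_j$ on compact subdomains, but that theorem requires a function in $C(B_1)$ vanishing on $\mathcal P$, whereas $(U_0)_n \tilde w_j$ blows up like $r^{-1/2}$ on $\Gamma$ unless $\tilde w_j$ vanishes there. To get equicontinuity of $\tilde w_j$ near $\Gamma$ you need a direct H\"older estimate for $w$ itself (the paper proves such an estimate in Lemma~\ref{hol} via barriers using the Neumann condition), or you must first subtract the trace on $\Gamma$ --- at which point you have essentially reproduced the paper's reduction and the compactness argument becomes unnecessary.
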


The proofs of these two theorems are postponed till the appendix. They use the linearity and the translation invariance in the $x'$ direction of the corresponding equations.

\section{Pointwise Schauder estimate}\label{s5}

In this section we present our key estimate, that is a pointwise Schauder estimate in slit domains. We prove it under rather general assumptions.   
Theorems \ref{Schauder} and \ref{exp} will easily follow from this result.

\begin{prop}[Pointwise Schauder estimate]\label{PSchauder}
Assume that $u\in C(B_1)$ is even and vanishes on $\mathcal P$, $\|u\|_{L^\infty} \le 1$, and
\begin{equation}\label{FB1}
\Delta u (X)= \frac{U_0}{r} \, R(x,r) +F(X) \quad \quad \quad \quad \mbox{in $B_1 \setminus \mathcal P$},
\end{equation}
with
$$|F(X)| \le r^{-\frac 12}|X|^{k+\alpha} \quad \quad \mbox{and $R(x,r)$ a polynomial of degree $k$ with $\|R\| \le 1$.}$$
 There exists a polynomial $P_0(x,r)$ of degree $k+1$ with coefficients bounded by $C$ such that
 $$\left|\frac{u}{U_0}-P_0\right| \le C |X|^{k+1+\alpha,}$$ and
 $$|\Delta(u-U_0P_0)| \le Cr^{-\frac 12}|X|^{k+\alpha} \quad \text{in $B_1 \setminus \mathcal P,$}$$ 
 with $C$ depending on $k$, $\alpha$, $n$.
\end{prop}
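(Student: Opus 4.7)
The plan is to run a Campanato-type iteration in dyadic balls $B_{\rho^m}$, bootstrapping from the flat constant-coefficient result Theorem \ref{SchauderL}. At each scale I build an incremental polynomial correction so that after subtracting it, the rescaled residual again satisfies the hypotheses of the proposition with the same structural constants; the telescoping sum of these corrections will be the required polynomial $P_0$.

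The heart of the argument is a one-step improvement: there exist $\rho\in(0,1/2)$ small and $C_\ast>1$ large, depending only on $n,k,\alpha$, such that whenever $u$ satisfies the hypotheses in $B_1$, there is a polynomial $Q(x,r)$ of degree $k+1$ with $\|Q\|\le C_\ast$ for which the rescaled function
\[
\tilde u(Y):=\rho^{-(k+3/2+\alpha)}\bigl(u-U_0 Q\bigr)(\rho Y)
\]
satisfies the hypotheses of the proposition in $B_1$, now with respect to the dilated slit $\Gamma/\rho$ (whose $C^{k+2,\alpha}$ norm does not increase), with a new polynomial $\tilde R$ of degree $k$, $\|\tilde R\|\le 1$, and a new forcing satisfying $|\tilde F|\le r^{-1/2}|Y|^{k+\alpha}$. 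Iterating and summing the corrections yields a polynomial $P_0$ of degree $k+1$ with $\|P_0\|\le C$ such that $|u/U_0-P_0|\le C|X|^{k+1+\alpha}$ holds on each annulus $B_{\rho^m}\setminus B_{\rho^{m+1}}$, hence throughout $B_1$. The auxiliary bound on $\Delta(u-U_0 P_0)$ then follows because the polynomial part of the forcing has been absorbed by construction at every step.

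To prove the one-step improvement I would first construct a ``particular-solution'' polynomial $\bar Q$ of degree $k+1$ such that, in the flat slit model, $\Delta(U_0\bar Q)=(U_0/r)R$ holds exactly. Using \eqref{dr1}, a direct computation shows that $\Delta(U_0\,x^\mu r^j)$ has the form $(U_0/r)\cdot(\text{polynomial in }(x,r))$, which gives a linear map on coefficients of $\bar Q$; invertibility modulo the harmonic factors $U_0 P$ produced by Theorem \ref{SchauderL} yields $\bar Q$ with $\|\bar Q\|\le C$. Then $v:=u-U_0\bar Q$ satisfies $\Delta v = F + E$, where $E$ collects the curvature contributions coming from the deviation of $\Gamma$ from its tangent hyperplane and from the variation of $\nabla_x d$, $\nabla_x r$ in \eqref{dr}, \eqref{dr1}; the $C^{k+2,\alpha}$ regularity of $\Gamma$ yields $|E|\le C\,r^{-1/2}|X|^{k+\alpha}$. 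Next I would compare $v$ with the harmonic function $h$ in the flat slit domain having matching boundary data on $\partial B_1$, apply Theorem \ref{SchauderL} at degree $k+1$ to obtain $|h-U_0^{flat}P^{flat}|\le C\,|X|^{k+2}\,U_0^{flat}$, and translate the estimate back to $v$ via an $L^\infty$ bound on $v-h$ coming from $E$. Taking $Q$ to be the truncation of $\bar Q+P^{flat}$ to degree $k+1$ closes the step with the required $\rho$-gain.

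The main obstacle is the perturbation bookkeeping on the curved slit. As emphasized in the introduction, a global change of variables flattening $\Gamma$ destroys the $(x,r)$-polynomial structure and costs a derivative relative to $\|\Gamma\|_{C^{k+2,\alpha}}$, so this strategy is unavailable. Instead I would have to estimate $\Delta(U_0\bar Q)-(U_0/r)R$ and the discrepancy $v-h$ directly on the curved slit, tracking how the curvature of $\Gamma$ filters through \eqref{dr} and \eqref{dr1} and verifying the uniform bound $O(r^{-1/2}|X|^{k+\alpha})$ in $B_1\setminus\mathcal P$. Once this delicate pointwise computation is in hand, the Campanato summation is standard and Proposition \ref{PSchauder} follows.
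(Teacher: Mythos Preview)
Your Campanato skeleton is right and matches the paper's strategy, but the comparison step has a genuine gap. You propose to compare $v=u-U_0\bar Q$, which lives on $B_1\setminus\mathcal P$ and vanishes on the \emph{curved} slit $\mathcal P$, with a harmonic function $h$ on the \emph{flat} slit domain ``with matching boundary data on $\partial B_1$.'' These functions do not live on the same domain and do not vanish on the same set, so there is no maximum-principle comparison giving an $L^\infty$ bound on $v-h$; the mismatch near $\Gamma$ versus near $\{x_n=0\}$ is exactly of the size you are trying to control. Your final paragraph acknowledges the difficulty but does not supply the missing mechanism.

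The paper resolves this differently. First it makes an initial dilation so that $\|\Gamma\|_{C^{k+2,\alpha}}\le\delta$, $\|R\|\le\delta$, $|F|\le\delta\, r^{-1/2}|X|^{k+\alpha}$; this smallness is essential and your statement of the one-step improvement omits it. Second, rather than a flat particular solution, it defines an \emph{approximating polynomial} on the curved slit: using \eqref{meq0}--\eqref{meq} one computes $\Delta(U_0P)=(U_0/r)(A_{\sigma l}x^\sigma r^l+w(x,r))$ with $w$ carrying the curvature terms, and one chooses the coefficients of $P$ so that the $A_{\sigma l}$ match those of $R$ exactly (this is your ``invertibility'' remark, made precise on the curved domain). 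After subtracting and rescaling, the error $\tilde u$ still lives on the \emph{rescaled curved} slit and one decomposes $\tilde u=\tilde u_0+\tilde v$ with $\tilde u_0$ harmonic on that same curved domain and $|\tilde v|\le C\delta\,\tilde U_0$ by an explicit barrier. The passage to the flat model is then by \emph{compactness}: as $\delta\to0$ the rescaled curved slit converges to the flat one in $C^{2,\alpha}$ and $\tilde u_0$ is uniformly H\"older, so $\tilde u_0$ is $\rho^{k+2+1/2}$-close to $\tilde U_0 Q$ with $U_0Q$ harmonic in the flat slit, by Theorem~\ref{SchauderL}. A final $O(\delta)$ adjustment of $Q$ restores the approximating-polynomial condition. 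This compactness step is what replaces your direct flat-vs-curved comparison; once you insert it (and the initial smallness reduction), the rest of your outline goes through.
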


The proof of Proposition \ref{PSchauder} is similar to the proof of the classical pointwise Schauder estimates, but in our case we work with monomials $U_0 x^\mu r^\gamma$ instead of monomials of the type $x^\mu x_{n+1}^\gamma.$ The reason is that monomials $U_0 x^\mu r^\gamma$ remain of the ``same form" after applying $\Delta$.

Indeed, first notice that in a $2D$ plane $(t, x_{n+1})$ with $r=\sqrt{t^2+x_{n+1}^2}$ we have
$$\Delta_{t,x_{n+1}} (r^m U_0)= m(m+1)\,r^{m-2} U_0,$$
\begin{equation}\label{meq0}
\p_t(r^m U_0)= U_0(\frac{1}{2} r^{m-1} + m \, t r^{m-2}).
\end{equation}
Therefore in $\R^{n+1}$ we obtain
\begin{equation}\label{meq}
\Delta(r^m U_0) = m(m+1) \, r^{m-2}U_0 + \kappa(x) \, \p_t (r^m U_0)
\end{equation}
with $\kappa(x)$ the mean curvature of the parallel surface to $\Gamma$ passing through $x$. We also denote by $\nu(x)$ the normal to this parallel surface. Thus,
$$\kappa(x) = -\Delta d \in C^{k, \alpha}_x, \quad \nu(x)= \nabla d \in C^{k+1,\alpha}_x.$$

To fix ideas, we present the proof of Proposition \ref{PSchauder} first in the case $k=0$. Then we explain the general case.

\subsection{Proof of Proposition \ref{PSchauder} in the case $k=0$} We remark that in this case $R$ is a constant. After performing an initial dilation, we may assume that our hypotheses in $B_1$ are
$$\|\Gamma\|_{C^{2,\alpha}} \le \delta, \quad |R| \le \delta, \quad  |F| \le \delta r^{-\frac 12}|X|^{\alpha},$$
for some $\delta$ small, to be made precise later.

From the formulas above, we have

$$\Delta U_0 = \frac{1}{2} \kappa(x) \frac{U_0}{r},$$

$$\Delta(r U_0) = \left(2+(d+\frac{1}{2} r) \kappa(x)\right) \frac{U_0}{r},$$
and we easily compute

$$\Delta(x_i U_0) =  \nu^i  \frac{U_0}{r}.$$

If $$P(X)=a_0+\sum_{i=1}^n a_i x_i + a_{n+1} r,$$ then
\begin{equation}\label{DU}
\Delta(U_0 P) = \frac{U_0}{r} \left(\frac{\kappa(0)}{2} a_0 + a_n + 2 a_{n+1} + h_0(x) + r h_1(x)\right)
\end{equation}
with
$$h_0, h_1 \in C_x^\alpha, \quad \quad h_0(0)=0, \quad \quad \|h_0\|_{C^\alpha}, \|h_1\|_{C^\alpha} \leq C \delta \|P\|.$$

We say that $P$ is an {\it approximating polynomial} for equation \eqref{FB1} at 0, if
$$\frac{\kappa(0)}{2} a_0 + a_n + 2a_{n+1} = R.$$

We prove Proposition \ref{PSchauder} by approximating $u$ in a sequence of balls $B_{\rho^m}$ with appropriate functions $U_0 P_m$ with $P_m$ approximating polynomials.

It suffices to prove the next lemma.

\begin{lem}\label{Imp} There exist universal constants $\rho$, $\delta$ depending only on $\alpha$ and $n$, such that if $P$ with $\|P\| \le 1$ is an approximating polynomial for $u$ in $B_\lambda$, that is $P$ is approximating for \eqref{FB1} at 0 and $$|u-U_0P|_{L^\infty(B_\lambda)} \leq \lambda^{3/2+\alpha},$$ for some $\lambda>0$, then there exists an approximating polynomial $\bar P$ for $u$ in $B_{\rho \lambda}$:
$$|u -U_0\bar P| _{L^\infty(B_{\rho\lambda})} \leq (\rho\lambda)^{3/2+\alpha}, \quad \quad \|\bar P- P\|_{L^\infty(B_\lambda)} \le C \lambda^{1+\alpha}.$$
\end{lem}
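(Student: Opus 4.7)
\textbf{Proof plan for Lemma \ref{Imp}.} The argument is the standard compactness/iteration step underlying pointwise Schauder estimates: subtract the known approximation, rescale to unit scale, compare to the flat-slit model, apply the constant-coefficient result Theorem \ref{SchauderL}, and undo the rescaling.

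\emph{Step 1 (subtracting $U_0P$).} Let $v:=u-U_0 P$. From formula \eqref{DU} and the assumption that $P$ is approximating at $0$ (so that the coefficient of $U_0/r$ in $\Delta(U_0P)$ at $0$ equals $R$), we obtain
\[
\Delta v \;=\; F \;-\; \frac{U_0}{r}\bigl(h_0(x)+r\,h_1(x)\bigr) \qquad\text{in } B_1\setminus\mathcal P,
\]
where $h_0(0)=0$ and $\|h_0\|_{C^\alpha},\|h_1\|_{C^\alpha}\le C\delta$. Together with the bound on $F$, this gives the pointwise estimate $|\Delta v(X)|\le C\delta\, r^{-1/2}|X|^\alpha$, while by hypothesis $\|v\|_{L^\infty(B_\lambda)}\le \lambda^{3/2+\alpha}$.

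\emph{Step 2 (rescaling).} Put $\tilde v(X):=\lambda^{-3/2-\alpha}\,v(\lambda X)$ and $\tilde\Gamma:=\lambda^{-1}\Gamma$. Since $\alpha<1$ and $\|g\|_{C^{2,\alpha}}\le\delta$, the rescaled free boundary satisfies $\|\tilde\Gamma\|_{C^{2,\alpha}}\le \delta$, in fact $\le\delta\lambda^\alpha$. The natural scaling $r(\lambda X)=\lambda\tilde r(X)$, $U_0(\lambda X)=\lambda^{1/2}\tilde U_0(X)$ yields $\|\tilde v\|_{L^\infty(B_1)}\le 1$ and
\[
|\Delta\tilde v(X)|\;\le\; C\delta\,\tilde r^{-1/2}|X|^\alpha \qquad\text{in } B_1\setminus\tilde{\mathcal P}.
\]

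\emph{Step 3 (comparison with the flat model and polynomial approximation).} Let $\tilde w$ be the even solution to the homogeneous Laplace equation in $B_{1/2}$ minus the flat slit $\mathcal P^{\flat}:=\{x_n\le 0,\,x_{n+1}=0\}$, vanishing on $\mathcal P^\flat$, with the same boundary values as $\tilde v$ on $\partial B_{1/2}$. Because both the $C^{2,\alpha}$ distance of $\tilde\Gamma$ from the flat plane and the right-hand side of the equation for $\tilde v$ are $O(\delta)$, a comparison/perturbation argument (using the boundary Harnack bound and the explicit barrier $U_0$) gives
\[
\|\tilde v - \tilde w\|_{L^\infty(B_{1/2})}\;\le\; C\delta.
\]
Apply Theorem \ref{SchauderL} with $k=1$ to $\tilde w$: there is a polynomial $Q_0(x,r)$ of degree $1$, $\|Q_0\|\le C$, with $U_0^\flat Q_0$ harmonic in $B_1\setminus\mathcal P^\flat$ and $|\tilde w-U_0^\flat Q_0|\le C|X|^2\,U_0^\flat\le C|X|^{5/2}$ on $B_{1/2}$. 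Since $\tilde U_0-U_0^\flat=O(\delta)$ (the slits being $\delta$-close in $C^{2,\alpha}$), we conclude for every $\rho\le 1/2$
\[
\|\tilde v-\tilde U_0 Q_0\|_{L^\infty(B_\rho)}\;\le\; C\rho^{5/2}+C\delta.
\]

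\emph{Step 4 (choice of parameters and undoing the scaling).} Choose $\rho$ small enough that $C\rho^{5/2}\le\tfrac12\rho^{3/2+\alpha}$ (possible since $\alpha<1$), then $\delta$ small enough that $C\delta\le\tfrac12\rho^{3/2+\alpha}$. Scaling back, define $\bar P(X):=P(X)+\lambda^{1+\alpha}\,Q_0(x/\lambda,\,r/\lambda)+\eta$, where $\eta$ is a constant to be chosen. The correction $\lambda^{1+\alpha}Q_0(x/\lambda,r/\lambda)$ has degree $1$ and $\|\cdot\|_{L^\infty(B_\lambda)}\le C\lambda^{1+\alpha}$, giving the claimed closeness $\|\bar P-P\|_{L^\infty(B_\lambda)}\le C\lambda^{1+\alpha}$. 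The error estimate $\|u-U_0\bar P\|_{L^\infty(B_{\rho\lambda})}\le(\rho\lambda)^{3/2+\alpha}$ follows by translating back the bound from Step 3.

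\emph{The main obstacle.} The output $\bar P$ must be \emph{approximating}, i.e.\ satisfy $\tfrac{\kappa(0)}{2}\bar a_0+\bar a_n+2\bar a_{n+1}=R$, whereas the polynomial $Q_0$ produced by Theorem \ref{SchauderL} is associated with the flat model (curvature $0$) and only satisfies $\bar a_n+2\bar a_{n+1}=0$ on the correction. Reconciling the two requires the constant $\eta$ above, chosen so that the approximating condition holds exactly; because $\kappa(0)$ is bounded and the discrepancy between the two conditions is $O(\delta)\cdot\lambda^{1+\alpha}$, the correction $\eta$ is $O(\delta\lambda^{1+\alpha})$ and is absorbed in the constant $C$ of the norm estimate. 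This curvature bookkeeping, together with carefully tracking how $\tilde U_0$ differs from $U_0^\flat$ in the comparison step, is the only non-routine part of the argument.
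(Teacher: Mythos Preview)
Your overall strategy matches the paper's: subtract $U_0P$, rescale, compare to the flat model via Theorem \ref{SchauderL}, then adjust to restore the approximating condition. Two points deserve comment.

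\medskip

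\emph{Step 3 is looser than the paper's argument.} You compare $\tilde v$ directly with a harmonic function $\tilde w$ in the \emph{flat} slit domain. But $\tilde v$ vanishes on the curved slit $\tilde{\mathcal P}$, not on $\mathcal P^\flat$, so matching boundary values on $\partial B_{1/2}$ does not by itself make $\tilde v-\tilde w$ small; you also need to control the mismatch along the two slits, and your claim $\tilde U_0-U_0^\flat=O(\delta)$ is not uniform near $\Gamma$ (both behave like $r^{1/2}$ but for different $r$'s). The paper instead decomposes $\tilde u=\tilde u_0+\tilde v$ with $\tilde u_0$ harmonic in the \emph{curved} slit domain and $\tilde v$ vanishing on its boundary; an explicit barrier gives $|\tilde v|\le C\delta\,\tilde U_0$, and then a compactness argument (as $\delta\to 0$, $\tilde\Gamma\to\{x_n=0\}$ and $\tilde u_0$ converges to a flat solution) furnishes the polynomial $Q$. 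Your version can likely be repaired, but as written the ``comparison/perturbation argument'' is a gap.

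\medskip

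\emph{The correction in Step 4 is genuinely wrong.} You propose to fix the approximating condition by adding a constant $\eta$ to $\bar P$. The defect after adding $\lambda^{1+\alpha}Q_0(\cdot/\lambda)$ is
\[
\tfrac{\kappa(0)}{2}\,\lambda^{1+\alpha}b_0 \;=\; O(\delta\lambda^{1+\alpha}),
\]
since $b_n+2b_{n+1}=0$. But shifting the constant term by $\eta$ changes the left side of the approximating relation only by $\tfrac{\kappa(0)}{2}\eta$, so solving for $\eta$ forces $\eta=-\lambda^{1+\alpha}b_0$, which is $O(\lambda^{1+\alpha})$, \emph{not} $O(\delta\lambda^{1+\alpha})$ as you claim (you have divided by $\kappa(0)\sim\delta$). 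Then in $B_{\rho\lambda}$,
\[
|U_0\,\eta|\;\le\;(\rho\lambda)^{1/2}\cdot C\lambda^{1+\alpha}\;=\;C\rho^{1/2}\lambda^{3/2+\alpha},
\]
and absorbing this into $(\rho\lambda)^{3/2+\alpha}$ would require $C\le \tfrac12\rho^{1+\alpha}$, which fails since $\rho$ is fixed small and $|b_0|$ may be of order one. The paper's fix is to adjust the $r$-coefficient instead: set $\bar Q:=Q-\tfrac14\kappa(0)b_0\lambda\,r$. Changing $a_{n+1}$ moves the approximating relation with coefficient $2$ (independent of $\delta$), so the needed correction has size $O(\delta)$ in $Q$, hence $\|Q-\bar Q\|\le C\delta$ and the extra error $|\tilde U_0(\bar Q-Q)|\le C\delta\rho^{3/2}$ is absorbed by choosing $\delta$ small after $\rho$. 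Replace your constant shift $\eta$ by this $r$-coefficient adjustment and the argument closes.
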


\begin{proof} Define $\tilde u$ to be the error between $u$ and $U_0P$ rescaled at unit size, that is
$$u-U_0P=:\lambda^{\frac 3 2 +\alpha} \tilde u (\frac X \lambda).$$
Then our assumption reads $\|\tilde u\|_{L^\infty(B_1)} \leq 1.$
Since $u$ solves \eqref{FB1},  $$F+\frac{U_0}{r}R - \Delta (U_0P)=\lambda^{-\frac 1 2 +\alpha} \Delta \tilde u(\frac X \lambda),$$ thus using \eqref{DU} and that $P$ is an approximating polynomial we obtain
\begin{equation}\label{eq1}
 \Delta \tilde u(\frac X \lambda)= \lambda^{\frac 12 -\alpha}\left (F(X)-\frac{U_0}{r}(h_0(x)+rh_1(x)\right).
 \end{equation}
Using the hypothesis on $F$ we find
$$|\Delta \tilde u(X)| \leq C \delta r^{-\frac 12}  \quad \text{in $B_1$.}$$
Denote by $\tilde \Gamma$, $\tilde {\mathcal P}$, $\tilde U_0$ the rescalings of $\Gamma$, $\mathcal P$ and $U_0$ from $B_\lambda$ to $B_1$ i.e. $$\tilde \Gamma:=\frac{1}{\lambda} \Gamma, \quad \tilde{\mathcal P}: = \frac 1 \lambda \mathcal P, \quad \quad \tilde U_0(X) := \lambda^{-\frac 12} U_0(\lambda X).$$

We decompose $\tilde u$ as $$\tilde u= \tilde u_0 + \tilde v$$ with
$$\begin{cases}
\Delta \tilde u_0 =0 \quad \quad\text{in $B_1 \setminus \tilde{\mathcal P}$},\\
\tilde u_0 = \tilde u \quad \quad \quad \text {on $\p B_1 \cup \tilde{\mathcal P}$,}
\end{cases}$$
and
$$\begin{cases}
|\Delta \tilde v| \leq C \delta r^{-\frac 12}   \quad \quad \text{in $B_1 \setminus \tilde{\mathcal P}$},\\
\tilde v = 0 \quad \quad \quad \quad \quad \text {on $\p B_1 \cup \tilde{\mathcal P}.$}\\
\end{cases}$$

Using barriers we can show the following
\begin{equation}\label{claim}
\|\tilde v\|_{L^\infty(B_1)} \leq C\delta \tilde U_0.
\end{equation}
We postpone the proof of \eqref{claim} till later.

To estimate $\tilde u_0$ we observe that  $\tilde u_0$ is a harmonic function in $B_1 \setminus \tilde{\mathcal P},$ $|\tilde u_0| \leq 1$ and as $\delta \rightarrow 0$, $\tilde {\Gamma}$ converges in the $C^{2,\alpha}$ norm to the hyperplane $\{x_n=0\}.$ Moreover, $\tilde u_0$ is uniformly H\"older continuous in $B_{1/2}$. By compactness, if $\delta$ is sufficiently small universal, $\tilde u_0$ can be approximated in $B_{1/2}$ by a solution of the Laplace problem with $\Gamma=\{x_n=0\}$. Thus by Theorem \ref{SchauderL},
\begin{equation}\label{Q}
\|\tilde u_0 - \tilde U_0 Q\|_{L^\infty(B_\rho)} \leq C \rho^{2+\frac 1 2 }
\end{equation}
with $\|Q\| \le C$, and since $U_0Q$ is harmonic we also get from \eqref{DU} that
$$Q=b_0 + b_ix_i + b_{n+1}r, \quad 2b_{n+1}+b_n =0.$$
Using also \eqref{claim} we find
$$\|\tilde u - \tilde U_0 Q\|_{L^\infty(B_\rho)} \leq C \rho^{\frac 5 2 } +C\delta \leq \frac 1 2 \rho^{\frac 3 2 +\alpha}$$
provided that we choose first $\rho$ and then $\delta$, universal, sufficiently small.

Writing this inequality in terms of the original function $u$ we find,
$$|u-U_0(P+\lambda^{1+\alpha}Q(\frac X \lambda))| \leq \frac 1 2 (\lambda \rho)^{\frac 3 2 + \alpha} \quad \quad \mbox{in $B_{\rho\lambda}$}.$$
However $P(X)+\lambda^{1+\alpha}Q(X/ \lambda)$ is not an approximating polynomial and therefore we need to perturb $Q$ by a small amount.
Let $$\bar Q:= Q-\frac{1}{4} \kappa(0) b_0\lambda r $$ thus $P+ \lambda^{1+\alpha} \bar Q(X/ \lambda)$ is approximating. Notice that $$\|Q-\bar Q\| \leq C \delta$$ and therefore we can replace $Q$ by $\bar Q$ in \eqref{Q} and obtain the same conclusion.

We define
$$\bar P= P+ \lambda^{1+\alpha} \bar Q(\frac x \lambda),$$ thus
$$\|\bar P-P\|_{L^\infty(B_\lambda)}\leq C\lambda^{1+\alpha}.$$

This concludes the proof of the lemma.
\end{proof}

We can now conclude the proof of Proposition \ref{PSchauder}.

After multiplying $u$ by a small constant, we see that the hypotheses of the lemma are satisfied for some initial $\lambda_0$ small with $P=Rx_n$. Now we may iterate the lemma for all $\lambda=\lambda_0 \rho^m$ and conclude that there exists a limiting approximating polynomial $P_0$, $\|P_0\|\le C$, such that $$|u-U_0P_0|\leq C|X|^{\frac 3 2 +\alpha} \quad \quad \mbox{in $B_1$.}$$

In $B_\lambda$ we may argue as in the proof above with $P_0$ replacing $P$ and obtain $$|\tilde u| \leq |\tilde u_0 | + |\tilde v| \le C \tilde U_0 \quad \text{in $B_{1/2}$,}$$
where we have used boundary Harnack inequality for $\tilde u_0$ and \eqref{claim} for $\tilde v$.
Thus,
$$\|u-U_0P_0\|_{L^\infty(B_\lambda)} \leq C \lambda^{1+\alpha} U_0.$$
Moreover, since $P_0$ is approximating, by \eqref{DU}
$$\Delta (u-U_0P_0)= F(X) + \frac{U_0}{r}(h_0(x)+rh_1(x)) = O( r^{-\frac 12} |X|^{\alpha}).$$
We are left with the proof of \eqref{claim}.

\

{\it Proof of claim \eqref{claim}.} We use as lower (upper) barriers multiples of the function
$$\bar v:=-U_0+U_0^2.$$
Notice that $\bar v \le 0$ in $B_1$. In the 2D plane $(t,x_{n+1})$ we have
$$\Delta \bar v \ge 2|\nabla U_0|^2 \ge c r^{-1}, \quad \quad |\p_t \bar v| \le Cr^{-\frac 12},$$
thus in $\R^{n+1}$ we also satisfy
$$ \Delta \bar v \ge c r^{-1}.$$
\qed

\

 We present some remarks which we often use about functions $w \in C^{k,\alpha}_{xr}(\Gamma)$.
Assume for simplicity that $k=1$ since the general case follows similarly.

\begin{rem}\label{r1} Let $P_0$ and $P_Z$ be the tangent polynomials for $w$ at $0$ and $Z \in \Gamma$ with $|Z| =\lambda$. Since both $P_0$ and $P_Z$ approximate $w$ in $B_{\lambda /2}(\lambda e_n)$ with a $C \lambda^{1+\alpha}$ error, then
$$\|P_0-P_Z\|_{L^\infty(B_{2\lambda})} \leq C \lambda^{1+\alpha}$$
and this implies that the free coefficients of $P_0$ and $P_Z$ differ by $C|Z|^{1+\alpha}$ and the first order coefficients differ by $C|Z|^\alpha$.

For general $k$ we obtain that the corresponding coefficients of the monomials of degree $m$ for $P_0$ and $P_Z$ differ by $C|Z|^{k-m+\alpha}$.

Notice that we only used that $P_0$ (respectively $P_Z$) approximates $w$ in a cone around the corresponding normal to $\Gamma$, say $\{|X| \le x_n \}$.
\end{rem}

\begin{rem}\label{r1*} Let $W$ be the function $W(X)=P_Z(x,r)$ where $Z$ denotes the projection of $x$ onto $\Gamma$. In other words $W$ coincides with the tangent polynomial on each 2D plane perpendicular to $\Gamma$. Then
$$W(X)=a_0(Z)+a_n(Z) d+ a_{n+1}(Z) r, \quad \quad w= W + O( r^{1+\alpha}),$$
for some functions $a_0$, $a_n$, $a_{n+1}$ defined on $\Gamma$. Thus, $w$ and $W$ have the same tangent polynomials on $\Gamma$. Now it is not difficult to
show that $a_0 \in C^{1,\alpha}$, $a_n \in C^{\alpha}$, $a_{n+1} \in C^\alpha$.

For general $k$ we find that $W$ is a polynomial of degree $k$ in $(d,r)$ with coefficients depending on $Z$. The monomials of degree $m$ in $(d,r)$ have coefficients in $C^{k-m, \alpha}(\Gamma).$
\end{rem}

\subsection{Applications of Proposition \ref{PSchauder} and Proof of Theorem \ref{Schauder}.}

It is clear that the statement \eqref{Su} in Theorem \ref{Schauder} follows from the pointwise estimate in Proposition \ref{PSchauder} applied with $f(X)=R(x,r)+ h(X)$ with $R$ a polynomial of degree $k$ and $h(X)= O(|X^{k+\alpha}|).$ To obtain \eqref{SDu}, we need to deduce 
some consequences of Proposition \ref{PSchauder} in which we estimate the derivatives of $u$ near $\Gamma$. Roughly speaking we can estimate $\nabla u$ by differentiating formally the expansion of $u$. However in order to do this we need to impose slightly more regularity on the right hand side $F$. First we notice that, by scaling, we can estimate the derivatives of $u$ from the conclusion of Proposition \ref{PSchauder} in non-tangential cones to $\Gamma$.
\begin{lem}\label{lg}
Assume that $u$ satisfies the hypotheses of Proposition $\ref{PSchauder}$. Then
\begin{equation}\label{nu}
\left |u_i-\frac{U_0}{r}P^i_0\right | \le C|X|^{\frac 12 +\alpha+k} \quad \quad \mbox{in the cone} \quad \{r \ge |x'| \},
\end{equation}
with $\deg P_0^i=k+1$, and $(U_0/r)P^i_0$ is obtained by formally differentiating $U_0 P_0$ at the origin in the $x_i$ direction.
\end{lem}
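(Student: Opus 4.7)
The plan is to deduce the gradient estimate from Proposition \ref{PSchauder} by a scaling argument together with standard interior estimates for the Laplacian. Fix a point $X_0$ with $\rho := |X_0|$ lying in the cone $\{r \ge |x'|\}$. The key geometric observation is that in this cone $\mathrm{dist}(X_0,\mathcal P) \ge c\rho$; concretely, the ball $B := B_{c\rho}(X_0)$ stays away from the slit, with $r \sim \rho$ and $U_0 \sim \rho^{1/2}$ throughout $B$.

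Next, I would set $v := u - U_0 P_0$. From the two conclusions of Proposition \ref{PSchauder} we get, in $B$,
\[
|v| \le C\, U_0\, |X|^{k+1+\alpha} \le C\rho^{k+\frac32+\alpha}, \qquad |\Delta v| \le C r^{-\frac12}|X|^{k+\alpha} \le C\rho^{k-\frac12+\alpha}.
\]
Applying the standard interior gradient estimate for Poisson's equation on $B$ (radius $c\rho$),
\[
|\nabla v(X_0)| \le C\!\left(\rho^{-1}\|v\|_{L^\infty(B)} + \rho\,\|\Delta v\|_{L^\infty(B)}\right) \le C\rho^{k+\frac12+\alpha},
\]
which is precisely the order of the claimed error.

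It then remains to identify the polynomial $P_0^i$ in $\partial_{x_i}(U_0 P_0)$. Using \eqref{dr}--\eqref{dr1}, a direct computation gives
\[
\partial_{x_i}(U_0 P_0) = \frac{U_0}{r}\Big[\tfrac12 \nu_i P_0(x,r) + r\, (\partial_i P_0)(x,r) + d\, \nu_i\, (\partial_r P_0)(x,r)\Big].
\]
Because $\nu \in C^{k+1,\alpha}$ and $d \in C^{k+2,\alpha}$ with $d(0)=0$, $\nu(0)=e_n$, and $\nabla_{x'}g(0)=0$, we may replace $\nu_i$ and $d$ by their Taylor polynomials at the origin and then truncate the resulting polynomial in $(x,r)$ to degree $k+1$; call the result $P_0^i$. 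The errors incurred are bounded by $C\,(U_0/r)\,|X|^{k+1+\alpha}$, which in the cone is again $O(\rho^{k+\frac12+\alpha})$.

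Combining these gives
\[
u_i = \partial_{x_i}(U_0 P_0) + \partial_{x_i}v = \frac{U_0}{r}\,P_0^i + O(|X|^{k+\frac12+\alpha}) \quad \text{in } \{r \ge |x'|\},
\]
as desired. The only mildly subtle point is the bookkeeping in the final step: verifying that the truncation of the bracketed expression yields a genuine polynomial of degree $k+1$ in $(x,r)$ (each of the three summands does: $\nu_i P_0$ after truncation, $r\,\partial_i P_0$ automatically, and $d\,\nu_i \partial_r P_0$ after truncation). Everything else reduces to the scaling estimate above, which is routine once Proposition \ref{PSchauder} is in hand.
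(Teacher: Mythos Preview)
Your approach is essentially the paper's: both derive the gradient bound for $v := u - U_0 P_0$ from the two conclusions of Proposition~\ref{PSchauder} via a scaling/interior-estimate argument, and both then identify $P_0^i$ by formally differentiating $U_0 P_0$ using \eqref{dr}--\eqref{dr1}. The paper rescales $v$ from $B_\lambda$ to $B_1$ and bounds $|\nabla_x \tilde u|$ on an annular conical region; your direct estimate at scale $\rho$ is equivalent.

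There is, however, one genuine error. Your ``key geometric observation'' that $\mathrm{dist}(X_0,\mathcal P)\ge c\rho$ in the cone $\{r\ge |x'|\}$ is false: the cone contains points arbitrarily close to (indeed, on) $\mathcal P$. For instance, in the flat case $X_0=(0,\dots,0,-1,\epsilon)$ has $|x'|=0\le r$ while $\mathrm{dist}(X_0,\mathcal P)=\epsilon$. What is true in the cone is only $r\ge c|X|=c\rho$, i.e.\ $X_0$ stays away from $\Gamma$, not from $\mathcal P$. So your interior gradient estimate on $B_{c\rho}(X_0)$ is unavailable as stated near $\mathcal P$.

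The fix is immediate: since $r\ge c\rho$, the ball $B_{c\rho/2}(X_0)$ avoids $\Gamma$, and inside it $\mathcal P$ is simply the flat hyperplane $\{x_{n+1}=0\}$ on which $v=0$. Working in the half-ball and using boundary gradient estimates for Poisson's equation with zero Dirichlet data (equivalently, odd reflection across $\{x_{n+1}=0\}$), your bounds $|v|\le C\rho^{k+3/2+\alpha}$ and $|\Delta v|\le C\rho^{k-1/2+\alpha}$ persist and yield the same conclusion $|\nabla v(X_0)|\le C\rho^{k+1/2+\alpha}$. The paper's proof also passes over this point silently, simply asserting $|\nabla_x\tilde u|\le C$ on the annular conical set $\mathcal C'$ (which likewise meets $\tilde{\mathcal P}$).
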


\begin{rem}\label{r2}
If the hypotheses of Proposition \ref{PSchauder} are satisfied at all points $Z \in \Gamma \cap B_{1/2}$ instead of only the origin then we obtain that the inequality \eqref{nu} holds in fact for all $X$ in a neighborhood of the origin. This follows easily by applying the arguments of Remark \ref{r1} to $u_i$.
\end{rem}

\begin{proof} We assume $k=0$. As in the proof of Proposition \ref{PSchauder} denote by $\tilde u$ the rescaling of $u-U_0P_0$ from $B_\lambda$ to $B_1$ i.e.
$$u-U_0P_0=\lambda^{\frac 32+\alpha}\tilde u (X/\lambda),$$
thus
$$\Delta \tilde u = \tilde F, \quad \quad \|\tilde u\|_{L^\infty(B_1)} \le C,$$ with

\begin{equation}\label{tildeF}\tilde F(X) := \lambda^{\frac 12-\alpha}{F(\lambda X)} + \frac{\tilde U_0}{r} \left( \lambda^{-\alpha}h_0(\lambda x) + \lambda^{1-\alpha} r h_1(\lambda x) \right).\end{equation}Let $\mathcal C$ denote the conical domain
$$\mathcal C :=\{ r \ge 2|x'|\} \cap (B_1 \setminus B_{1/4}).$$
Then
$$\|\tilde F\|_{L^\infty(\mathcal C)} \leq C,$$
hence
\begin{equation}\label{nu1}
|\nabla_x \tilde u| \leq C  \quad \text{in $\mathcal C':=\{r \geq |x'|\} \cap (B_{3/4} \setminus B_{1/2})$.}
\end{equation}
This gives, for all $\lambda>0$
$$|\nabla_x(u-U_0P_0)| \leq C \lambda^{\frac 1 2 +\alpha}  \quad \text{in $\mathcal C':=\{r \geq |x'|\} \cap (B_{\frac 3 4 \lambda} \setminus B_{\frac 1 2 \lambda})$.}$$
On the other hand,
$$\nabla_x(U_0P_0) = \frac{U_0}{r} \left [ \frac 12 P_0\nu + r \nabla_xP_0 + (\p_r P_0) d\nu \right].$$ Since $\nu, d \in C_x^{1,\alpha}$ we obtain in $\{r \geq |x'|\}$
$$|\p_i(U_0P_0) - \frac{U_0}{r}[P_0^i(x,r)]| \leq C\frac{U_0}{r}|X|^{1+\alpha}$$ with $deg P_0^i=1,$
and this proves Lemma \ref{lg}.
\end{proof}

We present some variations of Lemma \ref{lg}, which will lead to the proof of the second part of Theorem \ref{Schauder} as well.

\

1) If $\lambda^{\frac 12-\alpha}F(\lambda X)$ is uniformly H\"older continuous at all points in the conical $n$-dimensional set $\mathcal P \cap \mathcal C$ then, since $h_0, h_1$ are H\"older continuous, $\tilde F$ is also uniformly H\"older continuous at all points in this set (see formula \eqref{tildeF}).
Then, since the $u_i$'s ($1 \leq i \leq n$) vanish on the  plate $\mathcal P$, we can improve \eqref{nu1} to
$$|\nabla_x u| \le C \tilde U_0  \quad \text{in $\mathcal C'$.}$$
This means that the right hand side in $\eqref{nu}$ can be replaced by $C|X|^\alpha U_0$, that is
\begin{equation}\label{nuimp}
\left |u_i-\frac{U_0}{r}P^i_0\right | \le C\frac{U_0}{r}|X|^{\alpha+1+k}\quad \quad \mbox{in the cone} \quad \{r \ge |x'| \}.
\end{equation}

It is easy to check that this is the case when $F$ has the form $(U_0/r)h$ with $h(0)=0$ and $h$ pointwise $C^\alpha_X$ at $0$. 

Now \eqref{SDu} in Theorem \ref{Schauder} readily follows from \eqref{nuimp}, by decomposing  $f(X)=R(x,r)+ h(X)$ with $R$ a polynomial of degree $k$ and $h(X)= O(|X^{k+\alpha}|)$  and arguing as in Remark \ref{r2}.

\

2) If $\lambda^{\frac 12-\alpha}F(\lambda X)$ is uniformly H\"older continuous at all points in $\mathcal C$ then we can estimate the second derivatives. Indeed (see \eqref{tildeF}), $\|\tilde F\|_{C^\alpha(\mathcal C)} \leq C$, thus
\begin{equation}\label{D2}|D^2_x\tilde u| \le C \quad \mbox{in $\mathcal C'$}.\end{equation}
Since
$$\p_{ij}(U_0P_0) = \frac{U_0}{r^3} (P_0^{ij} + O(|X|^{k+2+\alpha})), \quad 1 \leq deg P_0^{ij} \leq k+2,$$
we obtain
$$\left |u_{ij}-\frac{U_0}{r^3} P_0^{ij} \right | \le C |X|^{k+\alpha-\frac 12} \quad \quad \mbox{in the cone} \quad \{|x'| \le r\}. $$

In the case $F \equiv 0$ then we can improve this estimate. Indeed, $\tilde F$ is now pointwise $C^{1,\alpha}$ and \eqref{D2} can be replaced by \begin{equation}\label{D2*}|D^2_x\tilde u| \le C \tilde U_0 \quad \mbox{in $\mathcal C'$}.\end{equation}

Then, arguing as in part 1) we obtain
$$ \left |u_{ij}-\frac{U_0}{r^3} P_0^{ij}\right | \le C \frac{U_0}{r^3}|X|^{k+1+ \alpha}.$$

\section{The proof of Proposition \ref{PSchauder}. The general case.}\label{s6}

The proof is essentially the same as in the case $k=0$. The difference occurs in the notion of approximate polynomial, since we need to satisfy several linear equations rather than just a single one.

We now proceed to give the definition of approximating polynomial, for this general case.

Let $\bar i $ denote the vector of indices $\mu$ with $1$ on the $i$th position and zeros elsewhere.
Using \eqref{meq0}, \eqref{meq}, we obtain
\begin{align*}
\Delta(x^\mu r^{m} U_0) &= r^m U_0 \Delta(x^\mu) + x^\mu \Delta(r^m U_0)+ 2 \nabla x^\mu \cdot \nabla (r^m U_0) \\
\  & = U_0  (r^m \mu_i (\mu_i-1)\, x^{\mu - 2\bar i} + m(m+1)\, x^\mu r^{m-2} +  \\
\   & + x^\mu(\frac 1 2 r^{m-1} + m \,d r^{m -2}) \kappa(x) + 2(\frac 1 2 r^{m-1} + m\, d r^{m-2}) \nu \cdot \nabla_x x^\mu ).
\end{align*}

By Taylor expansion at 0, we write each $\nu^i, d$ and $\kappa$ as a sum between a polynomial of degree $k$ and a $C^{k,\alpha}$ function in $x$ with vanishing derivatives up to order $k$ at 0. We use that the lowest degree terms in each expansion are
\begin{equation}\label{low} \nu^i= \delta_n^i + \ldots, \quad \kappa= \kappa(0)+\ldots, \quad d=x_n+\ldots
\end{equation}
We arrange the terms in $\Delta (x^\mu r^m U_0)$ by  the degree up to order $k$ an group the remaining ones in a remainder. Precisely, 
\begin{align*} \Delta(x^\mu r^m U_0)& = \frac{U_0}{r} [m(m+1+2\mu_n) \, x^\mu r^{m-1} + \mu_n \, x^{\mu -\bar n} r^m+ \\
&  +\mu_i(\mu_i-1)\,x^{\mu-2\bar i} r^{m+1}  +  c_{\sigma l}^{\mu m} x^\sigma r^l + w^{\mu m} (x,r)],
\end{align*}
with $$c_{\sigma l}^{\mu m} \ne 0 \quad \mbox{only if} \quad \quad |\mu| + m-1<|\sigma|+l \le k,$$
and $$w^{\mu m}(x,r) = r^m w^\mu_m(x) + m \, r^{m-1}w^\mu_{m-1}(x)$$ with $w^\mu_m$ and $w^\mu_{m-1}$ of class $C^{k,\alpha}_x$ with vanishing derivatives of all orders up to $k-m$ respectively $k-(m-1)$ at 0.

The monomials $c_{\sigma l}^{\mu m} x^\sigma r^l$ have strictly higher degree than the first terms and together with $w^{\mu m}$ can be thought as lower order terms. Notice that the coefficients $c_{\sigma l}^{\mu m}$ are linear combinations of polynomial coefficients at 0 of $\kappa(x), d \kappa (x), \nu^i, d \nu^i$ from \eqref{low}, and they vanish in the flat case $\Gamma=\{x_n=0\}$.

Thus under the assumption $\|\Gamma\|_{C^{k+2, \alpha}} \leq \delta$ (achieved after a dilation), 
 we may suppose that
\begin{equation}\label{small}|c_{\sigma l}^{\mu m}|\le \delta, \quad \|w^\mu_m\|_{C^{k,\alpha}},  \|w^\mu_{m-1}\|_{C^{k,\alpha}} \leq \delta.\end{equation}

If $$P= a_{\mu m} x^\mu r^m \quad \mbox{is a polynomial of degree $k+1$,}$$
then
$$\Delta (U_0P)= \frac{U_0}{r} (A_{\sigma l} x^\sigma r^l + w(x,r)), \quad \quad \quad |\sigma|+l \leq k,$$
with
\begin{align}\label{A} A_{\sigma l} & =  (l+1)(l+2 + 2\sigma_n) \, a_{\sigma,l+1}  + (\sigma_n +1) a_{\sigma +\bar n,l} +\\
 \nonumber &+ (\sigma_i +1)(\sigma_i+2) a_{\sigma+2\bar i,l-1} + c^{\mu m}_{\sigma l} a_{\mu m},
 \end{align}
 and
 $$w(x,r)= \sum_{m=0}^k r^m w_m(x),$$ with $w_m \in C_x^{k,\alpha}$ and derivatives up to order $k-m$ vanishing at zero. 
 Again, under the assumption $\|\Gamma\|_{C^{k+2, \alpha}} \leq \delta$, we have
 $$\|w_m\|_{C^{k,\alpha}} \leq \delta \max |a_{\mu \gamma}|.$$

From \eqref{A} we see that $a_{\sigma, l+1}$ (whose coefficient is different than 0) can be expressed in terms of $A_{\sigma l}$ and a linear combination of $a_{\mu m}$ with $ \mu + m < |\sigma| + l +1$
plus a linear combination of $a_{\mu m}$ with $ \mu + m = |\sigma| + l +1$ and $m < l+1$. This shows that the coefficients $a_{\mu m}$ are uniquely determined from the linear system \eqref{A} once $A_{\sigma l}$ and $a_{\mu 0}$ are given.

 \begin{defn}\label{d1} We say that $P$ is approximating for the equation \eqref{FB1} if $A_{\sigma l}$ coincide with the coefficients of $R$.
 \end{defn}

To obtain the proof of Proposition \ref{PSchauder} in the general case, is now enough to
obtain the same improvement of flatness as in Lemma \ref{Imp}, with the approximating polynomials defined above

Indeed, assume that (after a dilation)
$$\|\Gamma\|_{C^{k+2,\alpha}} \le \delta, \quad |R| \le \delta, \quad  |F| \le \delta r^{-\frac 12}|X|^{\alpha}.$$
Since $P$ is approximating, arguing as in the case $k=0$ we have
 $$u-U_0P=:\lambda^{k+\frac 3 2 +\alpha} \tilde u (\frac X \lambda),$$
and
\begin{equation}\label{Ft}
 \Delta \tilde u(\frac X \lambda)= \lambda^{\frac 12 -k-\alpha}\left (F(X)-\frac{U_0}{r}w(x,r)\right)=:\tilde F(\frac X \lambda).
 \end{equation}
Using that $\lambda^{m-k} w_m(\lambda x)$ has bounded $C^{k,\alpha}_x$ norm in $B_1$ together with the hypothesis on $F$, we obtain $$\|\tilde F(X)\|_{L^\infty(B_1)} \leq \delta.$$

Now the proof is the same as Lemma \ref{Imp}. The only difference is that the approximating polynomial $Q$ has degree $k+1$ and satisfies (see \eqref{A})
$$(l+1)(l+2 + 2\sigma_n) \,  q _{\sigma,l+1}  + (\sigma_n +1) \, q _{\sigma +\bar n,l} + (\sigma_i +1)(\sigma_i+2) \, q_{\sigma+2\bar i,l-1} =0,$$
with bounded $q_{\mu m}$.
 Then we need to modify $Q$ into $\bar Q$ such that $\bar Q(x/\lambda, r/\lambda)$ is approximating for $R \equiv 0$. Thus its coefficients solve the system \eqref{A} with $A_{\sigma l}=0$ and rescaled $c^{\mu m}_{\sigma l}$, i.e.
\begin{align}\label {barQ}   (l+1)(l+2 + 2\sigma_n) \, \bar q_{\sigma,l+1}  + (\sigma_n +1) \bar q_{\sigma +\bar n,l} +&  \\
 \nonumber + (\sigma_i +1)(\sigma_i+2) \bar q_{\sigma+2\bar i,l-1} +  \bar c^{\mu m}_{\sigma l} \bar q _{\mu m}&=0,
 \end{align}
 with
 $$\bar c^{\mu m}_{\sigma l}:=\lambda^{|\sigma|+l+1-|\mu|-m}c^{\mu m}_{\sigma l}, \quad \quad \mbox{hence} \quad |\bar c^{\mu m}_{\sigma l}|\le |c^{\mu m}_{\sigma l}| \le \delta.$$
 After subtracting the last 2 equalities we see that the coefficients  of $Q-\bar Q$ solve the linear system \eqref{barQ} with right hand side
 $A_{\sigma l} = \bar c^{\mu m}_{\sigma l}  q _{\mu m} $, hence $|A_{\sigma l}| \le C \delta$. 
 Thus by choosing $\bar q_{\mu 0}=q_{\mu 0}$ we can solve uniquely for $\bar Q$ and find
 $$\|\bar Q-Q\| \leq C\delta.$$
 
 This concludes the proof.
  \qed

\

Now we sketch the proof of Theorem \ref{exp} which follows from Proposition \ref{PSchauder}.

\

{\it Proof of Theorem \ref{exp}.} We assume that
$$\Delta u=F \quad \mbox{in} \quad B_1 \setminus \mathcal P, \quad \quad F \in C^{k, \alpha}(B_1),$$
and $u$ vanishes continuously on $\mathcal P$, and $\Gamma=\p_{\R^n} \mathcal P \in C^{k+2}$.
We decompose $u=u_{ev} + u_{od}$ in the even and odd part which solve the Laplace equation with right hand side $F_{ev}$ respectively $F_{od}$.

We have the following expansions
$$u_{od}=x_{n+1}\left (P_{od}(x,x_{n+1})+O(|X|^{k+1}) \right), \quad \quad \deg P_{od}=k,$$
for some polynomial $P_{od}$, even in $x_{n+1}$. For the even part we can write
$$\bar u(X):=u_{ev}(X)-x_{n+1}^2 T_0(x,x_{n+1}), \quad \quad \deg T=k,$$
for some appropriate even polynomial $T_0$, such that
$$\Delta \bar u=\bar F \quad \mbox{with} \quad \bar F=O(|X|^{k+\alpha}).$$
 We can apply for $\bar u$ Proposition \ref{PSchauder} at the origin and obtain
 $$\bar u=U_0(P_0(x,r) + O(|X|^{k+1+\alpha})).$$
 In conclusion
 $$u=U_0 P_0 + x_{n+1} P_{od} + x_{n+1}^2 T_0 + O( U_0|X|^{k+1+\alpha})).$$
Writing this at all points on $\Gamma$ and using the arguments of Remark \ref{r1} we obtain
$$u=U_0 \left(\sum_{m=0}^{k+1}a_m(x)r^m +O(r^{k+1+\alpha})\right) + x_{n+1}\left (\sum_{m =0}^k b_m(x) x_{n+1}^m \right ) ,$$
for functions $$a_m \in C^{k+1-m,\alpha}, \quad b_m \in C^{k-m, \alpha +\frac12}.$$
\qed

\

We conclude this section with the estimates for the derivatives of harmonic functions in slit domains.

\begin{prop}\label{ph}
Assume $\Gamma \in C^{k+2,\alpha}$, $k \ge 0$, and $u \in C(B_1)$, even, satisfies
$$\Delta u =0 \quad \mbox{in} \quad B_1 \setminus \mathcal P, \quad \quad u=0 \quad \mbox{on} \quad \mathcal P.$$
If $|\mu| \le k+2$, then
$$ D_x^\mu u = \dfrac{U_0}{r^{2|\mu| -1}}(P_0^{\mu}(x,r) + O(|X|^{k+|\mu|+\alpha})),$$ with $P_0^\mu$ having monomials with degrees between $|\mu| -1$ and $k+|\mu|.$

Moreover, $U_0r^{1-2|\mu|} P_0^\mu$ is obtained by differentiating formally $D^\mu_x (U_0 P_0)$ at $0$.
\end{prop}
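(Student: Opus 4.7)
The strategy is to combine the pointwise expansion provided by Proposition \ref{PSchauder} (applied at $0$ with $F \equiv 0$) with the rescaling arguments developed for Lemma \ref{lg} and its variants in Section \ref{s5}. Since $u$ is harmonic, Proposition \ref{PSchauder} yields a polynomial $P_0(x,r)$ of degree $k+1$ with $|u-U_0 P_0| \le C U_0 |X|^{k+1+\alpha}$ and $|\Delta(u-U_0 P_0)| \le C r^{-1/2}|X|^{k+\alpha}$. My plan is to bootstrap this pointwise expansion up to $|\mu|$ derivatives by an interior-regularity-plus-scaling argument, and then to match it against the formal derivative of $U_0 P_0$.

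First, I would rescale $\tilde u(X) := \lambda^{-(k+3/2+\alpha)}(u-U_0 P_0)(\lambda X)$ on $B_1$, so that $\tilde u$ is uniformly bounded, vanishes on $\tilde{\mathcal P}$, and satisfies $\Delta \tilde u = \tilde F$ with $\tilde F$ as in \eqref{Ft}. Because we are in the harmonic case $F \equiv 0$, the rescaled forcing $\tilde F$ comes entirely from the Taylor tails of $\nu$, $d$, $\kappa$, which inherit the $C^{k+1,\alpha}$ regularity of $\Gamma$. Iterated interior Schauder estimates for Laplace's equation, applied after the convergence/compactness step used for $\tilde u_0$ in Lemma \ref{Imp}, then give $|D_x^\mu \tilde u| \le C$ in the non-tangential cone $\mathcal C' = \{r \ge |x'|\}\cap(B_{3/4}\setminus B_{1/2})$ for all $|\mu| \le k+2$; it is precisely this inequality $|\mu|\le k+2$ that uses the full assumed regularity of $\Gamma$.

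In parallel, I would compute $D_x^\mu(U_0 P_0)$ formally using $\partial_i U_0 = (U_0/2r)\nu^i$ and $\partial_i r = (d/r)\nu^i$. Iterating these identities, each differentiation produces a factor of $1/r^2$ while keeping one explicit factor of $U_0$, so that after $|\mu|$ differentiations the result takes the form $D_x^\mu(U_0 P_0) = (U_0/r^{2|\mu|-1}) P_0^\mu(x,r) + E$, where $P_0^\mu$ is a polynomial in $(x,r)$ with monomials of degree between $|\mu|-1$ (from differentiating the constant term of $P_0$, whose $|\mu|$-th derivative has leading order $U_0 r^{1-|\mu|}$) and $k+|\mu|$, while $E$ collects the non-polynomial pieces coming from the Taylor tails of $\nu$, $d$, $\kappa$ beyond degree $k+2-|\mu|$. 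A direct bound then gives $|E| \le C (U_0/r^{2|\mu|-1})|X|^{k+|\mu|+\alpha}$.

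Finally, translating the cone bound $|D_x^\mu \tilde u| \le C$ back to the original scale gives $|D_x^\mu(u-U_0 P_0)| \le C \lambda^{k+3/2+\alpha-|\mu|}$ in $\{r \ge |x'|\}\cap(B_{3\lambda/4}\setminus B_{\lambda/2})$. Since in this region $r \sim \lambda \sim |X|$ and therefore $U_0/r^{2|\mu|-1} \sim \lambda^{3/2-2|\mu|}$, the exponents match: $\lambda^{k+3/2+\alpha-|\mu|}$ equals $(U_0/r^{2|\mu|-1}) |X|^{k+|\mu|+\alpha}$ up to constants, and combining with the expansion of $D_x^\mu(U_0 P_0)$ yields the claimed expansion. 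The main obstacle is the combinatorial bookkeeping in the formal differentiation: verifying that $|\mu|$-fold differentiation of a generic monomial $U_0 x^\sigma r^l$ assembles into a polynomial part of the stated degree range, modulo an error controlled by $C(U_0/r^{2|\mu|-1})|X|^{k+|\mu|+\alpha}$. This parallels the derivation of \eqref{A} in Section \ref{s6}, but must now be pushed $|\mu|$ steps deep instead of just one, and at each step one must carefully track how the Taylor remainders of $\nu$ and $\kappa$ of order $k+2-|\mu|+\alpha$ (a quantity that just barely remains nonnegative under the hypothesis $|\mu|\le k+2$) contribute to $E$.
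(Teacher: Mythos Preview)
Your overall strategy is the paper's own, but there is a real gap in the exponent-matching step. You assert that in the annular cone $\mathcal C' = \{r \ge |x'|\}\cap(B_{3\lambda/4}\setminus B_{\lambda/2})$ one has $U_0/r^{2|\mu|-1}\sim\lambda^{3/2-2|\mu|}$; this is false. The cone $\mathcal C'$ meets the slit $\mathcal P$ (take $x'=0$, $x_n<0$, $x_{n+1}=0$), and there $U_0=r^{1/2}\cos(\theta/2)$ vanishes even though $r\sim\lambda$. So only the one-sided inequality $U_0/r^{2|\mu|-1}\le C\lambda^{3/2-2|\mu|}$ holds, and your interior bound $|D_x^\mu\tilde u|\le C$, once unscaled, gives the remainder $O(|X|^{k+3/2+\alpha-|\mu|})$ rather than the required $O\!\left(\tfrac{U_0}{r^{2|\mu|-1}}|X|^{k+|\mu|+\alpha}\right)$. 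These differ by exactly a factor of $U_0/r^{1/2}$, which is not bounded below on $\mathcal C'$.

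The missing ingredient is the sharper estimate $|D_x^\mu\tilde u|\le C\,\tilde U_0$ in $\mathcal C'$. In the harmonic case $F\equiv 0$, the rescaled right-hand side $\tilde F$ in \eqref{Ft} comes only from the Taylor tails $w(x,r)$ of $\kappa,\nu,d$, and is therefore pointwise $C^{k+1,\alpha}$ on $\mathcal P\cap\mathcal C$. Boundary Schauder theory then makes $\tilde u$ of class $C^{k+2}$ up to $\mathcal P$; since $\tilde u$ vanishes on $\mathcal P$ and the $x$-derivatives are tangential to it, $D_x^\mu\tilde u$ vanishes on $\mathcal P$ for every $|\mu|\le k+2$, yielding $|D_x^\mu\tilde u|\le C\,\tilde U_0$. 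This is precisely the argument indicated in the paper (see the discussion after Lemma~\ref{lg} and the short proof of Proposition~\ref{ph}), and it is what makes the exponent match work uniformly across $\mathcal C'$. You should also note, as the paper does, that the resulting cone estimate at every $Z\in\Gamma$ must then be propagated to a full neighborhood of the origin via the polynomial-comparison argument of Remarks~\ref{r1}--\ref{r2}; your write-up stops at the cone.
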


 Indeed, as we discussed in Lemma \ref{lg} in the case $k=0$, these estimates follow from the proof of Proposition \ref{Schauder}. From \eqref{Ft} we see that $\tilde F$ is pointwise $C^{k+1,\alpha}$ on the set $\mathcal P \cap \mathcal C$. Hence if $|\mu| \le k+2$,
 $$|D_x^\mu \tilde u| \le C U_0 \quad \mbox{in} \quad \mathcal C',$$
which gives the conclusion in the cone $\{r \ge |x'|\}$. However, this expansion is valid around each such cone centered on $\Gamma$.
Then as in Remark \ref{r2} we can show that the conclusion holds in fact in a whole neighborhood of $0$.

\section{Proof of Theorem \ref{thin}}

In this section we prove Theorem \ref{thin}. We assume throughout that $$ \Gamma \in C^{k+2,\alpha}, \quad \quad \|\Gamma \|_{C^{k+2,\alpha}} \le \delta,$$
$$\Delta u=0 \quad \mbox{in $B_1 \setminus \mathcal P$,} \quad \quad \mbox{ $u$ is even and} \quad \frac 12 U_0 \le u \le 2 U_0,$$
and $w \in C(B_1)$, even, $\|w\|_{L^\infty} \leq 1$ solves
\begin{equation}\label{NP}
\begin{cases}
\Delta(u_n w) = 0 \quad \text{in $B_1 \setminus \mathcal P,$}\\
w_{\nu} = 0 \quad \text{on $\Gamma \cap B_1.$}
\end{cases}
\end{equation}

We want to show that $w \in C_{xr}^{k+2, \alpha}(0)$, that is we can find a polynomial $T_0(x,r)$, $\deg T_0=k+2$, such that $$|w-T_0| \leq C|X|^{k+2+\alpha},$$
with $C$ depending on $n,k, \alpha$. Throughout this section we use $O(|X|^\beta)$ as a notation for functions that are bounded by $C|X|^\beta$ with $C$ depending only on $n,k, \alpha$.

The proof follows the lines of the proof of Proposition \ref{PSchauder}, however it is more technical since it involves the singular weight $u_n$. This time we do not approximate directly $w$ by a polynomial of degree $k+2$, but rather by a sum between $r P(x,r)$, with $\deg P=k+1$, and a $C_x^{k+2,\alpha}$ function of $x$ with vanishing normal derivative on $\Gamma$. This function of $x$ has also the property that it solves \eqref{NP} with a controlled right hand side. A polynomial $Q$ of degree $k+2$ in $x$ does not have these properties, and we need to adjust it in order to satisfy them. Next we construct such functions.

\subsection{Definition of $E(Q)$}
Let $$ y \rightarrow x=(y', g(y')) + y_n \nu, \quad \quad \nu:=\frac{(-\nabla g,1)}{\sqrt{1+|\nabla g|^2}}$$ be a change of coordinates from $\R^n$ to $\R^n$ which maps $\{y_n=0\}$ to $\Gamma$ and the lines in the $y_n$-direction into lines perpendicular to $\Gamma$. Since $g \in C^{k+2,\alpha}$ this change of coordinates is of class $C^{k+1,\alpha}$, and at least formally it is of class $C^{k+2,\alpha}$ pointwise on $\{y_n=0\}$.
Let
\begin{equation}\label{dQ}
Q=Q(y') = q_{\mu} y^{\mu}, \quad \quad \quad |\mu| \leq k+2, \quad \quad q_{\mu}=0 \quad \mbox{if $\mu_n \ne 0$,}
\end{equation}
be a $k+2$ polynomial in $y'$ (hence it is constant in the $y_n$-direction).

We work with such polynomials $Q$ viewed as functions of the $x$-variable. As a function of $x$, $Q$ is only a $C^{k+1,\alpha}$ function. However, we show below that on $\Gamma$, $Q$ it is pointwise $C^{k+2,\alpha},$ that is it can be approximated by a polynomial of degree $k+2$ in $x$ with an error of order $k+2+\alpha$.

\

{\it Claim:} $Q \in C_x^{k+2,\alpha} (\Gamma).$

{\it Proof:} It suffices to show that each coordinate function $y_i$, $1\le i \le n$, is in $C_x^{k+2,\alpha} (\Gamma)$. We show this at the origin by comparing  the corresponding coordinate functions $y_i$ (as functions of $x$) for $\Gamma$ and $\Gamma_t$ its tangent $k+2$ polynomial at the origin. The coordinate functions $y_i$ differ in $B_\rho$ by $C \rho^{k+2+\alpha}$ since $\Gamma$ and $\Gamma_t$ differ by $C \rho^{k+2+\alpha}$ and the normals $\nu_\Gamma$ and $\nu_{\Gamma_t}$ differ
by $C \rho^{k+1+\alpha}.$ Clearly, in the case of $\Gamma_t$, $y_i$ is $C_x^{k+2,\alpha}$ at the origin. Thus the same holds for $\Gamma.$
\qed

\

Now we extend (regularize) $Q$ away from $\Gamma$ without changing its $k+2$ tangent polynomials on $\Gamma.$ The extension $E(Q)$ has the property that it is of class $C^{k+2,\alpha}$ at all $x$'s and on $\Gamma$ it coincides with $Q$ up to order $k+2$. The existence of $E(Q)$ follows from Whitney's extension theorem (see for example \cite{F}). For completeness,  we present its simple proof for our case in the Appendix.
Precisely, we have the following theorem.

\begin{thm}[Whitney Extension Theorem]\label{WE}
There exists $E(Q)$ such that $$D^\mu_x E(Q) = D^\mu_x Q \quad \text{on $\Gamma$, for all $|\mu| \leq k+2$}$$ and
$$\|E(Q)\|_{C_x^{k+2,\alpha}(B_1)} \leq C \|Q\|_{C_x^{k+2,\alpha} (\Gamma)}.$$
Moreover $E(Q)$ is linear in $Q$, and if $Q$ is given by \eqref{dQ}, then
$$E(Q)= \tilde q_\mu x^{\mu} + O(|x|^{k+2+\alpha})$$ with $$\tilde q_\mu = q_\mu + \tilde c_\mu^\sigma q_\sigma, \quad \quad \tilde c_\mu^\sigma \ne 0 \quad \mbox{only if $|\sigma| < |\mu|.$}$$
\end{thm}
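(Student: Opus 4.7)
The plan is to adapt Whitney's classical extension argument, using the tangent polynomials supplied by the preceding Claim. For each $Z \in \Gamma$, that Claim produces a polynomial $P_Z(x-Z)$ of degree $k+2$ satisfying $|Q(x)-P_Z(x-Z)|\le C|x-Z|^{k+2+\alpha}$. I would first verify that $\{P_Z\}_{Z\in\Gamma}$ is a \emph{compatible jet} on $\Gamma$: for any $Z,W\in\Gamma$ with $|Z-W|=\rho$, comparing $P_Z$ and $P_W$ to $Q$ on the ball of radius $\rho$ centered at $W$ shows that $P_Z-P_W$, itself a polynomial of degree $\le k+2$, is bounded by $O(\rho^{k+2+\alpha})$ there. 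Standard polynomial interpolation then gives that each coefficient of $P_Z-P_W$ expanded around $W$ is controlled by $O(\rho^{k+2+\alpha-|\mu|})$, which is exactly Whitney's compatibility condition.

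Next, given this compatible jet, I would define $E(Q)$ by the standard Whitney gluing. Decompose $B_1\setminus\Gamma$ into a family of dyadic cubes $\{Q_j\}$ with $\mathrm{diam}(Q_j)\sim\mathrm{dist}(Q_j,\Gamma)=:d_j$, assign to each $Q_j$ a nearest point $Z_j\in\Gamma$, and fix a smooth partition of unity $\{\phi_j\}$ subordinate to a slight enlargement of the $Q_j$, with the usual bounds $|D^\beta\phi_j|\le Cd_j^{-|\beta|}$ and bounded overlap. Set
\[ E(Q)(x):=\sum_j \phi_j(x)\,P_{Z_j}(x-Z_j)\quad\text{for } x\notin\Gamma,\qquad E(Q)|_\Gamma:=Q. \]
Linearity of $Q\mapsto E(Q)$ is inherited from the linearity of $Q\mapsto P_Z$. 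To verify $E(Q)\in C^{k+2,\alpha}(B_1)$ with the claimed norm bound, for any $x\in Q_{j_0}$ I would rewrite
\[ E(Q)(x)=P_{Z_{j_0}}(x-Z_{j_0})+\sum_j\phi_j(x)\bigl(P_{Z_j}(x-Z_j)-P_{Z_{j_0}}(x-Z_{j_0})\bigr) \]
using $\sum_j\phi_j\equiv 1$. Differentiating up to order $k+2$ and combining the bounds on $D^\beta\phi_j$ with the jet-compatibility estimate on the bounded number of overlapping cubes (all with $d_j\sim d_{j_0}$) then yields uniform control on $D^\beta E(Q)$ and the desired $\alpha$-H\"older modulus on the top derivatives; the same computation shows $D^\mu E(Q)(x)\to D^\mu_x P_Z(0)$ as $x\to Z\in\Gamma$, giving the jet-matching identity $D^\mu_x E(Q)=D^\mu_x Q$ on $\Gamma$.

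Finally, for the explicit form at the origin, the tangent polynomial of $E(Q)$ at $0$ is $P_0(x)$ by construction, so I only need to re-express it in terms of the original coefficients $q_\sigma$. Since $Q=q_\sigma y^\sigma$ and the change of variables $y\mapsto x$ has Jacobian equal to the identity at the origin (thanks to $g(0)=0$ and $\nabla g(0)=0$), we have $y_i=x_i+O(|x|^2)$, hence $y^\sigma=x^\sigma+(\text{strictly higher-order terms in } x)$. Collecting contributions up to order $k+2$, the $x^\mu$ coefficient of $P_0$ is $q_\mu$ (from $\sigma=\mu$) plus a linear combination of $q_\sigma$ with $|\sigma|<|\mu|$ (pulled from the higher-order parts of $y^\sigma$), which is exactly the triangular form $\tilde q_\mu=q_\mu+\tilde c_\mu^\sigma q_\sigma$. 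I expect the main technical obstacle to be the bookkeeping for the global $C^{k+2,\alpha}$ estimate near $\Gamma$ — the classical heart of Whitney's theorem — which requires applying the jet compatibility at the correct scale for each Whitney cube.
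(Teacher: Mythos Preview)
Your proposal is correct, but the paper takes a different and more economical route. Instead of the classical Whitney cube decomposition with partition of unity, the paper exploits the fact that $Q$ is already defined in a full neighborhood of $\Gamma$ (not merely as a jet on $\Gamma$) and constructs $E(Q)$ by a single convolution-type operator:
\[
E(Q)(x):=\int Q(y)\,\rho\!\left(\frac{x-y}{d}\right)d^{-n}\,dy,
\]
where $d=\mathrm{dist}(x,\Gamma)$ and $\rho$ is a fixed smooth bump with $\int\rho=1$ and vanishing moments $\int\rho\,x^\mu=0$ for $1\le|\mu|\le k+2$. The moment conditions force any polynomial of degree $\le k+2$ to be invariant under this averaging, so writing $Q=P_0+h$ near $0$ with $P_0$ the tangent polynomial gives $E(Q)=P_0+E(h)$, and the $C^{k+2,\alpha}$ estimate reduces to a straightforward bound on $E(h)$ after rescaling. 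Your approach is the textbook Whitney construction and works just as well; its advantage is that it only uses the jet data $\{P_Z\}$ on $\Gamma$ and is therefore more general, while the paper's convolution shortcut trades that generality for a much lighter proof with essentially no bookkeeping. Your derivation of the triangular form $\tilde q_\mu=q_\mu+\tilde c_\mu^\sigma q_\sigma$ from $y_i=x_i+O(|x|^2)$ matches the paper's reasoning exactly.
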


The last claim follows from the fact that $E(Q)$ and $Q$ have the same tangent polynomial at $0$ and for $1 \le i \le n-1$ we write $y_i$ as a polynomial of degree $k+2$ in $x$ plus an error $O(|x|^{k+2+\alpha})$. The first order in each expansion is
$$y_i=x_i + \, \, \mbox{\it{lower order terms}},$$
and $\tilde c_\mu^\sigma$ depend on the derivatives up to order $k+2$ of $g$ at the origin.

In the proof of Theorem \ref{thin}, we approximate $w$ at the origin by the sum of $E(Q)$ for some $Q$ as in \eqref{dQ} and a function in $C_{xr}^{k+2,\alpha}(0)$.

\subsection{Properties of $E(Q)$}
First we notice that, since $Q$ is constant on perpendicular lines to $\Gamma$ then $Q_\nu=0$ on $\Gamma$. Thus
$$E(Q)_\nu=0 \quad \mbox{on $\Gamma$.}  $$

In the next lemma we estimate $\Delta (u_e E(Q))$ for some unit vector $e$.

\begin{lem}\label{Ma} Let $e$ be a unit vector and let $u_e$ have the following expansion at $0$,$$u_e= \frac{U_0}{r} (P^e_0 + O(|X|^{k+1+\alpha})), \quad \quad \deg P_0^e=k+1.$$ Then, $$\Delta(u_e E(Q)) = \frac{U_0}{r} \left(R + O(|X|^{k+\alpha}) \right) \quad \text{in $B_1 \setminus \mathcal P,$}$$ with $R$ a polynomial of degree $k$ in $(x,r)$ and
$$R= A_{\sigma l} \,x^\sigma r^l , \quad \quad |\sigma| + l \leq k$$
with
$$A_{\sigma l} =
\begin{cases}
c_{\sigma l}^{\mu} q_{\mu}, \quad \quad  \quad \text{if $(\sigma_n,l) \neq (0,0)$,}\\
P^e_0(0) \,(\sigma_i + 1)(\sigma_i+2)\, q_{\sigma + 2\bar i}  + c_{\sigma l}^{\mu} q_{\mu} \quad \text{if $(\sigma_n,l)=(0,0)$,}
\end{cases}
$$
and $$c_{\sigma l}^{\mu} \ne 0 \quad \mbox{only if} \quad   |\mu| \leq |\sigma| + l +1.$$
\end{lem}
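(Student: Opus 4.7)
My starting point is the product rule: since $u_e$ is harmonic in $B_1 \setminus \mathcal P$ (as a derivative of the harmonic $u$) and $E(Q)$ depends only on $x$,
\begin{equation*}
\Delta(u_e E(Q)) = u_e \, \Delta_x E(Q) + 2\, \nabla_x u_e \cdot \nabla_x E(Q).
\end{equation*}
I plan to analyze the two terms separately: the first supplies the main contribution $P_0^e(0)\,(\sigma_i+1)(\sigma_i+2)\, q_{\sigma+2\bar i}$ to $R$, while the second contributes only lower-order corrections of the form $c_{\sigma l}^\mu q_\mu$.

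For the first term, Theorem~\ref{WE} gives $E(Q) = \tilde q_\mu x^\mu + O(|x|^{k+2+\alpha})$ pointwise in $x$ at $0$, with $\tilde q_\mu = q_\mu + \tilde c_\mu^\tau q_\tau$ ($|\tau|<|\mu|$). Differentiating twice,
$$\Delta_x E(Q) = \sum_{|\sigma|\le k}\Big(\sum_i (\sigma_i+1)(\sigma_i+2)\,\tilde q_{\sigma+2\bar i}\Big) x^\sigma + O(|x|^{k+\alpha}).$$
Multiplying by the given expansion of $u_e$ and truncating to degree $k$ in $(x,r)$, the coefficient of $x^\sigma r^l$ receives $p_{0,l}\sum_i(\sigma_i+1)(\sigma_i+2)\tilde q_{\sigma+2\bar i}$ (with $p_{0,l}$ the coefficient of $r^l$ in $P_0^e$), plus cross products between higher-degree pieces of $P_0^e$ and lower-degree pieces of $\Delta_x E(Q)$. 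Substituting $\tilde q = q + \tilde c\,q$ and using $q_\mu = 0$ for $\mu_n \ne 0$, the only ``clean'' piece $p_{0,0}(\sigma_i+1)(\sigma_i+2)q_{\sigma+2\bar i}$ survives exactly when $(\sigma_n,l)=(0,0)$ (which forces $i<n$), giving the stated main term; everything else collapses into $c_{\sigma l}^\mu q_\mu$, and a simple degree count confirms $|\mu|\le|\sigma|+l+1$.

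For the second term, Proposition~\ref{ph} gives $\partial_j u_e = (U_0/r^3)(P_0^{je} + O(|X|^{k+2+\alpha}))$, whose naive size $r^{-3/2}$ exceeds the target $r^{-1/2}$. Writing $u_e = (U_0/r) V_e$ and differentiating formally via $\nabla_x U_0 = \tfrac{U_0}{2r}\nu$, $\nabla_x r = \tfrac{d}{r}\nu$,
$$\nabla_x u_e = \frac{U_0 \nu}{r^2}\Big(\tfrac12-\tfrac{d}{r}\Big) V_e + \frac{U_0}{r}\nabla_x V_e,$$
so the bad singular term lies entirely in the $\nu$-direction. The key cancellation is $\nu\cdot\nabla_x E(Q)=0$ on $\Gamma$: indeed $Q$ is constant along normals to $\Gamma$ in $y$-coordinates, so $\nu\cdot\nabla_x Q=\partial_{y_n} Q=0$, and the Whitney identity $D_x^\mu E(Q)=D_x^\mu Q$ on $\Gamma$ transfers this to $E(Q)$. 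Hadamard's lemma then yields $\nu\cdot\nabla_x E(Q) = d\cdot h(x)$ with $h \in C^{k,\alpha}(B_1)$, and the extra factor $d$ absorbs one power of $r$:
$$\tfrac{U_0}{r^2}\Big(\tfrac12-\tfrac{d}{r}\Big)V_e\,(\nu\cdot \nabla_x E(Q)) = \tfrac{U_0\, d\, h}{r^2}\Big(\tfrac12-\tfrac{d}{r}\Big) V_e = O\!\Big(\tfrac{U_0}{r}\Big).$$
The tangential piece $(U_0/r)\nabla_x V_e\cdot\nabla_x E(Q)$ is already of size $U_0/r$. Expanding $\nu,d,h,V_e$ in pointwise Taylor series at $0$, every contribution reorganizes into $c_{\sigma l}^\mu q_\mu$; crucially no main-term-like $P_0^e(0) q_{\sigma+2\bar i}$ appears here because $P_0^e(0)$ multiplies either the vanishing factor $d\cdot h$ or $\nabla_x V_e$.

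The main obstacle is the degree bookkeeping in the second term: the Taylor expansions of $\nu$, $d$, $h$, and the polynomial $P_0^e$ interact to produce many cross-terms, and one has to verify in each $x^\sigma r^l$ slot that only $q_\mu$ with $|\mu|\le |\sigma|+l+1$ contribute. This is analogous to (but more intricate than) the bookkeeping carried out for the general-$k$ case of Proposition~\ref{PSchauder}, and it relies on the fact that lower-order terms in the Taylor expansions of the geometric quantities $\nu$ and $d$ are always paired with strict degree gains that keep the index $\mu$ within the allowed range.
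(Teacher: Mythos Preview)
Your overall strategy—product rule, main term from $u_e\,\Delta E(Q)$, and the observation that the $r^{-3/2}$-singular part of $\nabla u_e$ points along $\nu$—is correct and close to the paper's. The gap is in the gradient term: your Hadamard step extracts only first-order vanishing, $\nu\cdot\nabla_x E(Q)=d\,h$ with $h\in C^{k,\alpha}$, and then your ``normal piece'' becomes
\[
\frac{U_0}{r}\cdot\frac{d}{r}\Big(\tfrac12-\tfrac{d}{r}\Big)\,h\,V_e\,.
\]
The factors $d/r$ and $(d/r)^2$ are bounded but depend on the angle $\theta$ and are \emph{not} polynomials in $(x,r)$; so this expression cannot be reorganized as $\tfrac{U_0}{r}\big(\text{polynomial in }(x,r)+O(|X|^{k+\alpha})\big)$ unless $h$ itself vanishes to high order on $\Gamma$—which is in fact true, but which your one-step Hadamard argument does not give. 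The same angular obstruction hides inside $\nabla_x V_e$: once you actually compute it via Proposition~\ref{ph} (you assert its boundedness without doing so), it contains a $\tfrac{d}{r}\,\nu\,\partial_r P^e_0$ term.

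What is missing is the full strength of the Whitney matching: since $D^\mu_x E(Q)=D^\mu_x Q$ on $\Gamma$ for every $|\mu|\le k+2$ and $\nu\cdot\nabla Q\equiv 0$ in a whole neighborhood (not just on $\Gamma$), one has $\nu\cdot\nabla E(Q)=O(|d|^{k+1+\alpha})$, so the entire normal piece is error and the $d/r$ factors disappear. The paper obtains exactly this, packaged as the sharper decomposition $\nabla E(Q)=\nabla Q+|d|^{k+\alpha}\xi+|d|^{k+1+\alpha}\eta$ with $\xi\cdot\nu=0$; it then computes on the two-dimensional plane perpendicular to $\Gamma$ at each base point $Z$ (where $\nabla Q\cdot\nu=0$ exactly and $\nu^i=\delta^n_i$), arriving at the clean identity $\nabla u_e\cdot\nabla E(Q)=\tfrac{U_0}{r}\big(\partial_x P^e_Z\cdot\nabla Q+O(r^{k+\alpha})\big)$ with no angular factors.
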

\begin{proof} Since $\Delta u_e=0$ we have
$$\Delta (u_e E(Q)) = u_e \Delta \, E(Q) + 2\nabla u_e \cdot \nabla \, E(Q).$$

From Theorem \ref{WE} we know that $\Delta \, E(Q)$ is pointwise $C_x^{k,\alpha}$ at the origin and its expansion is obtained by formally differentiating the expansion of $E(Q)$ at the origin. Next we estimate the second term by making use that $\nabla E(Q)$ is almost parallel to $\Gamma$.

We claim that $$\nabla E(Q) = \nabla Q + |d|^{k+\alpha} \xi + |d|^{k+\alpha+1} \eta$$ for two bounded vectors $\xi, \eta \in \R^n$ with $\xi \cdot \nu =0.$

Assume for simplicity that $x$ is a point on the $e_n$-axis. Then, since $E(Q) \in C_x^{k+2,\alpha}$ we find
$$\nabla E(Q)(x) = \sum_{m=0}^{k+1}\frac{1}{m!} x_n^{m} \nabla \partial_n^mQ(0) + O(|x_n|^{k+1+\alpha}).$$
Since $Q \in C_x^{k+1,\alpha}$ we see by Taylor expansion that $$ \sum_{m=0}^{k+1}\frac{1}{m!} x_n^{m} \nabla \partial_n^mQ(0) =\nabla Q(x) + \xi |d|^{k+\alpha},$$ for some bounded vector $\xi$.
Moreover, since $Q$ is constant on perpendicular lines to $\Gamma$, $\nabla Q(x) \cdot e_n=0$ and 
$\p^l_nQ(0) =0$ for all $l \leq k+2$. Thus, the formula above gives  $\xi \cdot e_n=0$ and our claim is proved.

From Proposition \ref{ph}
$$u_{ei} = \frac{U_0}{r^2} \left [(\frac 1 2 - \frac d r) \nu^i P^e_0 + r \, \, \p_iP^e_0 + d \, \nu^i \, \p_rP^e_0  + O(|X|^{k+1+\alpha}) \right].$$
Thus at a point $X$ in the 2D plane perpendicular to $\Gamma$ at $0$, i.e. $\{x'=0\}$, we have $\xi \cdot e_n=0$, $\nabla Q \cdot e_n=0$, $\nu^i=0$ for $i \ne n$, hence
$$|\xi \cdot \nabla u_e| \leq C \frac{U_0}{r}, \quad |\eta \cdot \nabla u_e| \leq C \frac{U_0}{r^2}.$$
and
$$\nabla u_e \cdot \nabla Q = \frac{U_0}{r} [\p_i P^e_0 \, \,  \p_i Q +O(r^{k+\alpha})].$$

This means that at an arbitrary point $X$ we find
$$\nabla u_e \cdot \nabla E(Q) = \frac{U_0}{r}[\partial_x P^e_Z \cdot \nabla Q + O(r^{k+\alpha})],$$ where $P_Z^e$ is the $k+1$ order polynomial in the expansion of $u_e$ at $Z \in  \Gamma$, projection of $X$ onto $\Gamma$. Also, for a polynomial $P(x,r)$, $\p_x P(x,r)$ denotes the gradient with respect to $x$ with $r$ thought as independent of $x$.

As in Remark \ref{r1} we may replace $ \p _x P^e_Z$ with $\p _x P^e_0$ and create an error of order $O(|Z|^{k+\alpha}) $. In conclusion
$$\nabla u_e \cdot \nabla \, E(Q) = \frac{U_0}{r} (\p_x P^e_0 \cdot \nabla Q + O(|X|^{k+\alpha})),$$ and
$$\Delta(u_e  \, E(Q))= \frac{U_0}{r} [P^e_0 \Delta \, E(Q) + \p_x P^e_0 \cdot \nabla Q + O(|X|^{k+\alpha})],$$
and the conclusion follows by using the expansions for $E(Q)$ and $Q$ at the origin.
\end{proof}

We remark that the coefficients $c_{\sigma l}^\mu$ depend on the coefficients of $P_0^e$ and $\tilde c_{\mu}^\sigma$.

\subsection{Compactness of solutions to \eqref{NP}} Let $P_0$ be the approximating polynomial for $u$ at 0 (given by Theorem \ref{Schauder}).
From now we assume, after multiplying $u$ by a constant (recall that $ \frac 12 U_0 \le u \le 2 U_0$), that $P_0(0)=1$ thus
 \begin{equation}\label{ua}
 u=U_0 (1+O(|X|)),
 \end{equation}
 and then we find (see \eqref{form}) \begin{equation}\label{uan}u_n=\frac{U_0}{r}(\frac 12 + O(|X|)).\end{equation}

Notice that the rescalings $$\tilde u (X) := \lambda^{-1/2} u(\lambda X)$$ satisfy the same properties and as $\lambda \to 0$ and $$\tilde u \to U_0, \quad \tilde u_n  \to (U_0)_n, \quad \tilde \Gamma \to L:=\{ x_n=0, x_{n+1}=0\}.$$

Next we prove that $w$ is uniformly H\"older continuous. We prove this under more relaxed hypotheses on $w$.

\begin{lem} \label{hol}Let $u$ be as above and let $w$ satisfy
$$\begin{cases} |\Delta(u_n w)| \leq \dfrac{U_0}{r}  \quad \text{in $B_1 \setminus \mathcal P$}\\
 |w_{\nu}| \leq 1 \quad \text{on $\Gamma$},  \quad \quad \|w\|_{L^\infty(B_1)} \leq 1.\end{cases}$$ Then, $w \in C^\beta$ and $\|w\|_{C^\beta(B_{1/2})} \leq C,$ for some $\beta$ small, universal.
\end{lem}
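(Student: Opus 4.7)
The plan is the classical compactness plus oscillation-decay scheme. The central claim is a \emph{dichotomy lemma}: there exist universal $\rho, \eta, \varepsilon_0 \in (0,1)$ such that if $\|w\|_{L^\infty(B_1)} \le 1$, $|\Delta(u_n w)| \le \varepsilon_0 U_0/r$ in $B_1 \setminus \mathcal P$, and $|w_\nu| \le \varepsilon_0$ on $\Gamma$, then $\mathrm{osc}_{B_\rho} w \le 1-\eta$. Granted the dichotomy, one absorbs the coefficient $1$ in the hypothesis $|\Delta(u_n w)| \le U_0/r$ into $\varepsilon_0$ by an initial dilation: under the scaling $\tilde u(X) = \lambda^{-1/2} u(\lambda X)$, $\tilde w(X) = w(\lambda X)$, a direct computation gives $|\Delta(\tilde u_n \tilde w)| \le \lambda^2 \tilde U_0/\tilde r$ and $|\tilde w_\nu| \le \lambda$, so both forcings get strictly smaller. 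Iterating the dichotomy dyadically at subsequent scales produces $\mathrm{osc}_{B_{\rho^m}} w \le (1-\eta)^m$, which is the desired $C^\beta$ estimate with $\beta = \log(1/(1-\eta))/\log(1/\rho)$.

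To prove the dichotomy I would argue by contradiction. Take sequences $u^j, \Gamma_j, w_j$ satisfying the hypotheses with $\varepsilon_0^j \to 0$ but $\mathrm{osc}_{B_\rho} w_j > 1-\eta$ for every $j$. The flatness $\|\Gamma\|_{C^{2,\alpha}} \le \delta$ built into the setting of Theorem \ref{thin} forces $\Gamma_j \to L := \{x_n = x_{n+1} = 0\}$ in $C^{2,\alpha}$. The two-sided bound $\tfrac12 U_0 \le u^j \le 2 U_0$ together with Theorem \ref{Schauder} and interior regularity for harmonic functions in slit domains give $u^j \to U_0$ and $u^j_n \to (U_0)_n$ locally uniformly on $B_1 \setminus L$. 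Exploiting the evenness of $w_j$ in $x_{n+1}$ (which converts the smooth part of the slit into a mirror-symmetric interior problem), the equation for $w_j$ is uniformly elliptic with smooth coefficients on compacts of $B_1 \setminus \Gamma_j$, and standard interior Hölder estimates give local equicontinuity of the $w_j$. Extracting a subsequence, $w_j \to w_\infty$ locally uniformly on $B_{3/4} \setminus L$, with $|w_\infty| \le 1$, $\Delta((U_0)_n w_\infty) = 0$ in $B_{3/4} \setminus L$, and $(w_\infty)_\nu = 0$ on $L$ in the limiting sense.

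The main obstacle is upgrading $w_\infty$ to a function continuous on all of $B_{3/4}$, a prerequisite for applying Theorem \ref{thin0L}. For this I would establish a universal modulus of continuity for $w_j$ up to $\Gamma_j$ using explicit barriers in the spirit of $\bar v = -U_0 + U_0^2$ from the proof of \eqref{claim}: at a point $Z \in \Gamma_j$ one compares $w_j \mp w_j(Z)$ on $B_\rho(Z)$ to barriers of the form $\pm C\,(r^\gamma - r_0^\gamma)$ for some small $\gamma > 0$. Using the identity $\mathrm{div}(u_n^2 \nabla w) = u_n\,\Delta(u_n w)$ together with the smallness of $\Delta(u_n w)$ and of $w_\nu$, the barriers have a definite sign for the operator $\Delta(u_n\,\cdot)$ and enclose $w_j$, giving equicontinuity up to $\Gamma_j$ uniform in $j$. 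Once $w_\infty \in C(B_{3/4})$, Theorem \ref{thin0L} with $k=1$ yields $|w_\infty(X) - T(x,r)| \le C|X|^2$ for some $T$ affine in $(x,r)$, so $\mathrm{osc}_{B_\rho} w_\infty \le C\rho$; choosing $\rho$ small and $\eta < 1/2$ contradicts $\mathrm{osc}_{B_\rho} w_j > 1 - \eta$, completing the proof of the dichotomy.
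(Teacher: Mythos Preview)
Your overall framework---reduce by rescaling to small forcings, then prove an oscillation-decay dichotomy---is exactly what the paper does. The difference is in how the dichotomy is established. The paper does \emph{not} argue by compactness; it constructs an explicit lower barrier
\[
v \;=\; -1 + \delta_1\Bigl(\tfrac14 + E(Q) + \tfrac{U_0}{u_n}(1+Mr)\Bigr), \qquad Q(y')=-|y'|^2,
\]
and compares $w$ to $v$ directly in the tube $B_{3/4}\cap\{r<c\}$, using Harnack and boundary Harnack for $u_n w$ on the surface $\{r=c\}$ to control the lateral data, and the Neumann inequality on $\Gamma$ to rule out a contact point there. The term $E(-|y'|^2)$ is what absorbs the \emph{tangential} oscillation along $\Gamma$, while $\tfrac{U_0}{u_n}(1+Mr)$ supplies the right sign for $\Delta(u_n\,\cdot)$ and a favorable normal derivative on $\Gamma$.

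Your compactness route has a real gap precisely at the step you flag as the ``main obstacle.'' The proposed barriers $\pm C(r^\gamma - r_0^\gamma)$ depend only on $r$ and are therefore constant along $\Gamma$; on the lateral boundary $\{|y'-z'|=\rho\}$ of any cylinder around $Z\in\Gamma_j$ they tend to $0$ as $r\to0$ and cannot dominate $w_j - w_j(Z)$, since nothing yet controls the variation of $w_j$ \emph{along} $\Gamma_j$. Closing that comparison forces you to add a tangential piece of the form $-|y'-z'|^2$, i.e.\ essentially to reproduce the paper's barrier---at which point the compactness detour is superfluous, because that barrier already delivers the oscillation decay directly. There is also a circularity hazard: in this paper the Appendix proof of Theorem~\ref{thin0L} explicitly \emph{uses} Lemma~\ref{hol} (for the uniform H\"older continuity of $w$), so invoking Theorem~\ref{thin0L} inside the proof of Lemma~\ref{hol} is logically backward unless you import the flat estimate independently from \cite{DS1}.
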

\begin{proof} The fact that $w \in C^{\beta}$ away from $\Gamma$ is obvious.  We only need to show that the oscillation of $w$ as we approach $\Gamma$ decreases at a geometric rate.

The rescalings $$\tilde w(x) = w(\lambda x)$$ satisfy in $B_1$
$$\begin{cases} |\Delta(\tilde u_n \tilde w)| \leq \lambda^2 \dfrac{\tilde U_0}{r} \quad \text{in $B_1 \setminus \mathcal P$}\\
 |\tilde w_{\nu}| \leq \lambda \quad \text{on $\Gamma$}, \|\tilde w\|_{L^\infty(B_1)} \leq 1.\end{cases}$$
Thus by scaling, it suffices to show that if
$$\begin{cases} |\Delta(u_n w)| \leq \delta_0 \dfrac{U_0}{r} \quad \text{in $B_1 \setminus \mathcal P$}\\
 |w_{\nu}| \leq \delta_0 \quad \text{on $\Gamma$}, \|w\|_{L^\infty(B_1)} \leq 1,\end{cases}$$
with $|u- U_0| \leq \delta_0, \|g\|_{C^{2,\alpha}} \leq \delta_0$, then $$\text{osc}_{B_{\delta_0}} w \leq 2-\delta_0,$$ for some $\delta_0$ universal.

Assume $w(\frac 1 2 e_n) \geq 0.$
We construct a lower barrier for $w$ defined as
$$v: = -1+\delta_1 \left(\frac 1 4 + E(Q) + \frac{U_0}{u_n}(1+Mr)\right)$$
with $Q(y') = -|y'|^2$, and some $\delta_1$ small, and $M$ large to be made precise later. From Lemma \ref{Ma}
$$\Delta(u_n \, E(Q)) \geq -C\frac{U_0}{r},$$
and we first choose $M$ large such that (see \eqref{meq})
$$\Delta(U_0(1+Mr)) \geq (cM-C) \frac{U_0}{r} \ge 2C \frac{U_0}{r}.$$
Notice that on $\Gamma$, (see \eqref{uan})
$$\p_\nu \left(E(Q) +\frac{U_0}{u_n}(1+Mr)\right) \geq c.$$

We compare $w$ and $v$ in the  cylindrical region $ B_{3/4} \cap \{ r < c\}.$  We have
$$ v <-1\le w \quad \quad \mbox{on} \quad \p B_{3/4} \cap \{r<c\},$$
provided that we take $c$ sufficiently small.

Since $w(\frac 1 2 e_n) \geq 0,$ by Harnack inequality (and boundary Harnack) for $u_n w$ we obtain that $w \geq -1+c_0$ on $B_{3/4} \cap \{r=c\}$.
If we choose $\delta_1$ small enough, then we have
$$v \leq w \quad \text{on $B_{3/4} \cap \{r=c\}$}.$$

If $\delta_0 \ll \delta_1$ we have $\Delta (u_n w) \le \Delta (u_n v)$ in $(B_{3/4} \cap \{r<c\}) \setminus \mathcal P$. Then $ v \le w$ by the maximum principle and the conclusion easily follows since $v \ge -1+\delta_1/8$ in a neighborhood of $0$. Indeed, the minimum of $w-v$ cannot occur on $\Gamma$ because of the free boundary condition, cannot occur on $\mathcal P$ because of Hopf lemma and cannot occur in the interior because of the classical maximum principle.
\end{proof}

\begin{lem}[Compactness]\label{comp}
 Let $u_k (x)$ be a sequence of harmonic functions in $B_1 \setminus \mathcal{P}_k$, vanishing on $\mathcal{P}_k$, with \begin{equation*}
 u_k=U_0 (1+\delta_k O(|X|)) \quad \|\Gamma_k\|_{C^{2,\alpha}} \leq \delta_k,
 \end{equation*}
for a sequence $\delta_k \to 0.$
 Let $w_k$ satisfy
$$\begin{cases} |\Delta((\p_nu_k)w_k)| \leq \delta_k \dfrac{U_0}{r} , \quad \text{on $B_1 \setminus \mathcal P$}\\
 |\p_{\nu}w_{k}| \leq \delta_k \quad \text{on $\Gamma$}, \quad \quad \|w_k\|_{L^\infty(B_2)} \leq 1.\end{cases}$$
Then there is a subsequence of $w_k$ that converges uniformly on compact sets to $\bar w$ that satisfies the limiting equation in the flat case i.e.
$\bar{\mathcal P}= \{x_n <0\}, \bar \Gamma =\{x_n=0\}$, 
$$\begin{cases} \Delta((U_0)_n \bar w) = 0 \quad \text{in $B_1\setminus \bar {\mathcal P}$}\\
\p_\nu \bar w =0 \quad \text{on $\bar \Gamma$.}
\end{cases}$$
\end{lem}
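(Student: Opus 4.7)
The plan is the standard compactness plus passage-to-the-limit argument. First I would apply Lemma \ref{hol} to get uniform H\"older regularity: since $\delta_k \to 0$, for $k$ large the hypotheses of that lemma are satisfied with $\delta_k$ in place of $\delta_0$, so $\|w_k\|_{C^\beta(B_{1/2})} \leq C$ with $C$ and $\beta$ independent of $k$. Combined with $\|w_k\|_{L^\infty}\leq 1$, Arzel\`a-Ascoli yields a subsequence (not relabeled) with $w_k\to\bar w$ uniformly on compact subsets of $B_{1/2}$, with $\bar w\in C^\beta$. At the same time, $\Gamma_k\to\bar\Gamma=\{x_n=0\}$ in $C^{2,\alpha}$, $\mathcal P_k\to\bar{\mathcal P}$ in Hausdorff distance, and $u_k\to U_0$ uniformly on $\overline{B_1}$ by hypothesis. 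On compact subsets of $B_1\setminus\bar{\mathcal P}$, interior estimates for harmonic functions then promote this to smooth convergence of $u_k$ and its derivatives, giving $\partial_n u_k \to (U_0)_n$ locally uniformly on such sets.

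Second, I would pass to the limit in the PDE away from $\bar{\mathcal P}$. On a compact set $K\subset B_1\setminus\bar{\mathcal P}$ the coefficient $\partial_n u_k$ converges smoothly to the smooth positive function $(U_0)_n$, and the right hand side $f_k:=\Delta((\partial_n u_k)w_k)$ satisfies $\|f_k\|_{L^\infty(K)}\to 0$. Standard interior elliptic estimates then upgrade uniform convergence $w_k\to\bar w$ to $C^2_{\mathrm{loc}}$ convergence on $K$, yielding $\Delta((U_0)_n\bar w)=0$ classically in $B_{1/2}\setminus\bar{\mathcal P}$.

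Third, I would recover the Neumann condition $\bar w_\nu=0$ on $\bar\Gamma$ via a weak formulation. After even reflection in $x_{n+1}$, the PDE $\Delta((\partial_n u_k)w_k)=f_k$ together with the pointwise condition $|\partial_\nu w_k|\leq\delta_k$ on $\Gamma_k$ is equivalent, after integration by parts against a test function $\varphi\in C_c^\infty(B_1)$ even in $x_{n+1}$, to
\[
\int_{B_1}(\partial_n u_k)\,w_k\,\Delta\varphi\,dX=\varepsilon_k(\varphi),\qquad|\varepsilon_k(\varphi)|\leq C(\varphi)\,\delta_k,
\]
where the interior error uses $|f_k|\leq\delta_k U_0/r$ together with local integrability of $U_0/r$ near $\Gamma_k$, and the boundary error along $\Gamma_k$ uses $|\partial_\nu w_k|\leq\delta_k$ together with local integrability of $\partial_n u_k\sim r^{-1/2}$. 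Dominated convergence, with the pointwise envelope $|(\partial_n u_k)w_k|\leq Cr^{-1/2}$, passes this identity to $\int_{B_1}(U_0)_n\bar w\,\Delta\varphi\,dX=0$, which is exactly the weak statement of $\Delta((U_0)_n\bar w)=0$ in $B_1\setminus\bar{\mathcal P}$ together with $\bar w_\nu=0$ on $\bar\Gamma$; continuity of $\bar w$ then upgrades the Neumann condition to the pointwise sense required.

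The main obstacle I expect is the singular weight in the third step. One must set up the weak formulation so that the boundary contributions along $\Gamma_k$ are genuinely of order $\delta_k$, and not $\delta_k$ times a constant that blows up like $r^{-1/2}$. The natural way, I expect, is to treat $(\partial_n u_k)w_k$ as a distribution on the even double of the slit domain and identify the Neumann defect on $\Gamma_k$ with the singular part of its distributional Laplacian supported on $\Gamma_k$.
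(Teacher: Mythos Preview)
Your first two steps---uniform H\"older estimates from Lemma \ref{hol} plus Arzel\`a--Ascoli, and passage to the limit in the interior equation---match the paper's proof exactly.

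The divergence is in step 3. The paper does \emph{not} use a weak formulation. Instead, it first declares (in the Remark immediately following the lemma) that the condition $\partial_\nu\bar w=0$ on $\bar\Gamma$ is to be understood in a viscosity sense: $\bar w$ cannot be touched from below at a point of $\bar\Gamma$ by a function of the form $b-a_1|y'-y_0'|^2+a_2 r$ with $a_1,a_2>0$. The proof then verifies this directly by a barrier argument: if such touching occurred, one perturbs the test function to a strict comparison function, then uses uniform convergence $w_k\to\bar w$ to touch $w_k$ from below by an explicit subsolution of the form
\[
v_k=b_k-E\bigl(2a_1|y'-y_1'|^2\bigr)+\frac{U_0}{\partial_n u_k}\Bigl(\tfrac{a_2}{4}+\tfrac{M}{2}r\Bigr),
\]
which is the same barrier shape as in Lemma \ref{hol}. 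This contradicts the maximum principle for large $k$.

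Your weak-formulation route is conceptually reasonable but runs into the very obstacle you flag without resolving it. Integration by parts in $B_1\setminus\mathcal P_k$ produces boundary terms on the two sides of the slit $\mathcal P_k$, not on the codimension-2 edge $\Gamma_k$, and the Neumann defect of $w_k$ lives precisely on $\Gamma_k$; identifying it with a distributional contribution of $\Delta((\partial_n u_k)w_k)$ supported on $\Gamma_k$ requires an argument you have not supplied. More seriously, even granting the limiting identity $\int (U_0)_n\bar w\,\Delta\varphi=0$ for even $\varphi$, you have not explained why this yields the \emph{viscosity} Neumann condition above, which is the form actually consumed downstream (Theorem \ref{thin0L} and the iteration in the proof of Theorem \ref{thin}). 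The paper's barrier argument sidesteps both issues and is the intended mechanism here.
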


\begin{rem} The free boundary condition for $\bar w$ is understood in the viscosity sense defined in \cite{DS1}, i.e. $\bar w$ cannot be touched on $L$ say at 0 by below by a function of the form
$$b-a_1|y'-y'_0|^2 + a_2r \quad \quad \mbox{with $a_1>0$ and  $a_2>0$},$$
for some constants $b_0$, $a_1$, $a_2$ and some vector $y_0'$.
\end{rem}
\begin{proof} The fact that $w_k \to \bar w$ (up to a subsequence) on compact subsets of $B_1$ follows from Lemma \ref{hol}. Also, from our assumptions $u_k \to U_0, \Gamma_k \to \bar  \Gamma.$

Clearly, $(U_0)_n \bar w$ is harmonic in the interior.
It remains to check the condition on $\bar \Gamma$, that is we cannot touch $\bar w$ by below with a function as above. Otherwise, we can also touch by below $\bar w$ strictly in a neighborhood of 0 with the function
$$b_0-2a_1|y'-y_1'|^2+ \frac {a_2}{ 2} r + Mr^2\quad \text{with $M \gg 2a_1$,}$$
for some $b_0$ and $y_1'$. Since $w_k \to \bar w$ uniformly, then we can touch $w_k$ by $$v_k:=b_k -E(2a_1|y'-y_1'|^2) + \frac{U_0}{u_n}(\frac {a_2} {4}+\frac{M}{2}r),$$ for some constant $b_k$.
As in the proof of Lemma \ref{hol}, $v_k$ is a strict subsolution to our Neumann problem for all $k$ large and we reach a contradiction.
\end{proof}

\subsection{Proof of Theorem \ref{thin}} We argue similarly to the proof of Proposition \ref{PSchauder} and we approximate $w$ inductively in sequence of balls $B_{\rho^m}$. However, in this case we do not use directly polynomials of degree $k+2$ in $(x,r)$, but rather functions which are pointwise $C^{k+2,\alpha}_{xr}(0)$ and approximate better the Neumann problem. Precisely we use functions of the type
 $$W_{Q,P}:=E(Q)+ \frac{U_0}{u_n} P$$
 with $Q$ a polynomial of degree $k+2$ in $y$ as in \eqref{dQ} i.e.
 $$Q=q_\mu y^{\mu}, \quad |\mu| \leq k+2, \quad \quad q_\mu=0 \quad \mbox{if $\mu_n \ne 0$,}$$
 and $P$ a polynomial of degree $k+1$ in $(x,r)$,
$$P= a_{\mu m} x^\mu r^m, \quad |\mu| + m \leq k+1.$$
By Theorem \ref{Schauder} we have
\begin{equation}\label{pst}
\frac{U_0}{u_n} = r \left (P_*(x,r) + O( |X|^{k+1+\alpha}) \right), \quad \quad \deg P_*=k+1,
\end{equation}
and also by \eqref{ua}, $P_*(0,0)=2$.

We say that a pair $(Q,P)$ is approximating for the Neumann problem \eqref{NP} if:

\

$(i) \quad P_*P$ vanishes of order $k+1$ on $\Gamma;$

$(ii) \quad P$ is approximating as in Definition \ref{d1} for $-R$ from Lemma \ref{Ma}. That is, the coefficients of $P$ satisfy the system \eqref{A} with left hand side $-A_{\sigma l}$ where $A_{\sigma l}$ is given in Lemma \ref{Ma} (with $e=e_n$).

\

 Condition $(i)$ says that on $\Gamma$
 \begin{equation}\label{W1}
 \p _\nu W_{Q,P}=O(|X|^{k+1+\alpha}),
 \end{equation}
 and condition $(ii)$, in view of Lemma \ref{Ma}, gives 
 \begin{equation}\label{W2}
 \Delta (u_nW_{Q,P})= O(\frac{U_0}{r} |X|^{k+\alpha}).
 \end{equation}

We write below the two conditions above in terms of the coefficients of $Q$ and $P$. For convenience, we relabel the coefficients as
$$b_{\mu,0}:=q_\mu, \quad \quad b_{\mu, m+1}:=a_{\mu m} \quad \mbox{for $m \ge 0$.}$$

Precisely $(i)$ says that by taking $r=0$ and $x_n=g(x')$ in $P_*P$, then $P_*P$ vanishes of order $k+1$ (in $|x'|$) at the origin. Hence, by looking at the coefficient of $x^{\mu'}$ we find
\begin{equation}\label{b1}
b_{(\sigma', 0),1} = {\widehat c}_{\sigma'}^\mu \, \, b_{\mu, 1}, \quad \quad {\widehat c}_{\sigma'}^\mu \ne0 \quad \mbox{only if $|\mu| < |\sigma'|$},
\end{equation}
with ${\widehat c}_{\sigma'}^\mu$ depending on the derivatives of $g$ and the coefficients of $P_*$. Thus $b_{\mu,1}$ are determined uniquely from the linear system \eqref{b1} once $b_{\mu, 1}$ with $\mu_n \ne 0$ have been fixed.

Property $(ii)$ can be written as
\begin{align}\label{b} \nonumber 0  =  (l&+1)(l+2 + 2\sigma_n) \, b_{\sigma,l+2}  + (\sigma_n +1) b_{\sigma +\bar n,l+1} +\\
  &+ (\sigma_i +1)(\sigma_i+2) b_{\sigma+2\bar i,l} +  \bar c ^{\mu m}_{\sigma l} b_{\mu,m}, \quad \quad \quad \mbox{if $(\sigma_n,l) \ne (0,0)$;}
 \end{align}
\begin{align*} 0  =  (l&+1)(l+2 + 2\sigma_n) \, b_{\sigma,l+2}  + (\sigma_n +1) b_{\sigma +\bar n,l+1} +\\
 \nonumber &+ \frac 12 (\sigma_i +1)(\sigma_i+2) b_{\sigma+2\bar i,l} + \bar c ^{\mu m}_{\sigma l} b_{\mu,m},\quad \quad \quad \mbox{if $(\sigma_n,l)=(0,0)$,}
 \end{align*}
with $$\bar c ^{\mu m}_{\sigma l} \ne 0 \quad \quad \mbox{only if} \quad |\mu| +m < \sigma + l +2. $$

In this system $b_{\sigma,l+2}$ is determined by a linear combination of $b_{\mu, m}$'s with $|\mu|+m \le |\sigma| + l + 2$ and in case of equality, of $b_{\mu,m}$'s with $m < l+2$. Thus the coefficients $b_{\mu,m}$ are determined uniquely from this system once $b_{\mu,0}$ and $b_{\mu,1}$ have been fixed.

In conclusion all coefficients $b_{\mu,m}$  are determined uniquely from the two linear systems \eqref{b1}-\eqref{b} once $b_{\mu,0}$ and $b_{\mu, 1}$ with $\sigma_n \neq 0,$ are given. Notice that, by definition, we always take $b_{\mu,0}=0$ if $\mu_n \neq 0$.

Now we proceed with the proof of the theorem. After an initial dilation, we may suppose that we are close enough to the linear case, that is (see \eqref{ua}),
$$\|g\|_{C^{k+2,\alpha}(B'_1)} \le \delta, \quad \quad u_n=\frac{U_0}{r}\left(P^n_0(x,r) + \delta O(|X|^{k+1+\alpha})\right),$$
 for some polynomial $P^n_0$ of degree $k+1$ with
 $$\|P^0_n-\frac 12 \| \le \delta.$$
Using this in Lemma \ref{Ma} and in \eqref{pst}, we find that
$$|\bar c^{\mu m}_{\sigma l}| \le C \delta, \quad \quad |{\widehat c}^\mu_{\sigma'}| \le C \delta,$$
and \eqref{W1}-\eqref{W2} hold with the right hand side multiplied by $\delta$, that is
$$
 \p _\nu W_{Q,P}=\delta O(|X|^{k+1+\alpha}),
$$
$$
 \Delta (u_nW_{Q,P})= \delta O(\frac{U_0}{r} |X|^{k+\alpha}).
 $$

It suffices to show that if $w$ satisfies \eqref{NP} and
$$\left|w -W_{Q,P} \right| \le \lambda^{k+2+\alpha} \quad \mbox{in $B_\lambda$}, \quad \lambda \le 1,$$
for some approximating pair $(Q,P)$ with $\|Q\|,\|P\| \le 1$, then
$$\left|w -W_{\bar Q, \bar P} \right| \le (\rho \lambda)^{k+2+\alpha} \quad \mbox{in $B_\lambda$}, \quad \quad \|(\bar Q+r\bar P)-(Q+rP)\|_{L^\infty(B_\lambda)} \le C \lambda^{k+2+\alpha},$$
for some approximating pair $(\bar Q,\bar P)$. Then the theorem follows by applying this result inductively by starting with the initial approximating pair $(0,0)$ in $B_1$.

We prove the claim above similarly as in Lemma \ref{Imp}. We write $$w= W_{Q,P} + \lambda^{k+2+\alpha} \tilde w(X/\lambda).$$

Then $\|\tilde w\|_{L^\infty(B_1)} \le 1$ and $$\Delta (\tilde u_n \tilde w)=\lambda^{\frac 12 -k-\alpha} \Delta (u_n W_{P,Q}), \quad \quad \quad \tilde w_\nu=\lambda^{-(k+1+\alpha)} \, \p_\nu(W_{Q,P}).$$
Using \eqref{W1}-\eqref{W2} we have that in $B_1$,
$$|\Delta (\tilde u_n \tilde w)| \leq C \delta \frac{\tilde{U_0}}{r}, \quad \quad |\tilde w_\nu| \leq \delta \quad \mbox{on $\tilde \Gamma$}.$$

Thus by the compactness Lemma \ref{comp} and Theorem \ref{thin0L}
$$|\tilde w -  \tilde Q(x')-r \tilde P| \leq \frac{1}{4} \rho^{k+2+\alpha}  +C\rho^{k+3} \le \frac 12 \rho^{k+2+\alpha} \quad \text{in $B_\rho$,}$$
and $(\tilde Q,\tilde P)$ solves the system \eqref{W1}-\eqref{W2} with vanishing constants $\widehat{c}^{\mu}_{\sigma'}$ $\bar c_{\sigma l}^{\mu m}$.
As before, we can modify $(\tilde Q,\tilde P)$ above into $(\bar Q, \bar P)$ such that
$$\|(\tilde Q+r\tilde P)-(\bar Q+r\bar P)\| \le C \delta, \quad \mbox{and $(\bar Q,\bar P)(X/\lambda)$ is approximating.}$$
By taking $\delta$ sufficiently small we obtain
$$\left|\tilde w -\left(E(\bar Q) + \frac{\tilde U_0}{u_n} \bar P\right)\right|\leq \rho^{k+2} \quad \quad \mbox{in $B_\rho$,}$$
which gives the desired claim.

\qed

\section{Appendix}

We now prove our estimates for the constant coefficients case (see Section 4.3 for the statements.)

\

\noindent {\it Proof of Theorem $\ref{SchauderL}$.}
The function $u$ is uniformly H\"older continuous on compact sets of $B_1$. Moreover, since the equation is invariant after differentiating in the $x'$ direction we find
$$\|D^\mu_{x'} u\|_{C^\beta (B_{1/2})} \le C(|\mu|), \quad \quad \mu=(\mu_1,..,\mu_{n-1},0,0),$$
i.e. $u$ is $C^\infty$ in the $x'$ variable. We write the equation as
$$\Delta_{(x_n,x_{n+1})}u=-\Delta_{x'}u=:f(x),$$
and due to the invariance of the equation in the $x'$ direction, $f$ and $u$ have the same regularity properties. In particular they depend in a $C^\infty$ fashion on the $x'$ variable.

 We determine the behavior of $u$ in the $(x_n,x_{n+1})$ variables by solving the Laplace equation above in each two dimensional plane $x'=const.$
Using the complex change of variables $ z \to z^2$ i.e.
$$\bar u(z):=u(z^2), \quad \bar f(z):=f(z^2) \quad \quad z:=x_n +i x_{n+1},$$
we find
$$\Delta \bar u=4|z|^2 \bar f,$$
and $\bar u$ vanishes on $x_n=0$. After an odd reflection with respect to $x_n$, we see that the equation above is satisfied for functions $\bar u$ and $\bar f$ which are even in $x_{n+1}$, odd in $x_n$ and have the same regularity properties. This easily implies that $\bar u$ and $\bar f$ are $C^\infty$ in $z$. Moreover, $\bar u$ has a polynomial expansion at $0$ of the type
$$\bar u= x_n \left( P(x_n^2,x_{n+1}^2) +O\left(|z|^{2k+2}\right) \right), \quad \quad \deg P=k.$$
We obtain the desired result by writing $P(x_n^2,x_{n+1}^2)$ as a polynomial of degree $k$ in the variables $Re \, z^2= x_n^2-x_{n+1}^2$ and $|z|^2=x_n^2+x_{n+1}^2$, and then scaling back to $u$.

The claim that $U_0P_0$ is harmonic in $B_1 \setminus \mathcal P$ follows from scaling. Indeed let $P_0= \sum_{m=0}^k p_0^m(x,r)$ with each $p_0^m$ a homogeneous polynomial of degree $m$. We argue by induction on $m$. Clearly the statement is true for $m=0.$ Assume it is true for all $m \leq l<k.$ Then,
\begin{equation}\label{scal}v:=u - U_0\sum_{m=0}^{l} p_0^m = U_0(p_0^{l+1}(x,r) + o(|X|^{l+1}))\end{equation}
and the function $v$ is harmonic. We rescale,
$$v_\lambda(X) = \frac{v(\lambda X)}{\lambda^{1/2+l+1}}$$ and obtain a sequence of harmonic functions which by \eqref{scal} tend to $U_0 p_0^{l+1}$ as $\lambda \to 0.$ Thus $U_0 p_0^{l+1}$ is harmonic as well.
\qed

\

\noindent{\it Proof of Theorem $\ref{thin0L}$.}
This was proved in \cite{DS1}, and its proof is similar to the proof above. We sketch below a slightly different proof that uses Theorem \ref{Schauder}.

In Lemma \ref{hol} we already obtained uniform H\"older continuity of solutions $w$ on compact sets. By the invariance of the equation in the $x'$ direction we obtain that $w$ depends in a $C^\infty$ fashion in the $x'$ variable. Using barriers similar to the ones in Lemma \ref{comp} one can easily obtain that
\begin{equation}\label{wi}
|w(X)-w(x',0,0)| \le Cr \quad \quad \mbox{in $B_{1/2}$}.
\end{equation}
Then the function
$$\bar w(X):=w(X)-w(x',0,0),$$
satisfies
$$\Delta (\frac {U_0}{r} \bar w)= \frac{U_0}{r} f(x') \quad \quad f(x'):=- \Delta_{x'} w(x',0,0) \in C^\infty,$$
and, by \eqref{wi}, $v:=(U_0/r) \bar w$ vanishes continuously on $\Gamma$. We may apply Theorem \ref{Schauder} to $v$ and obtain
$$\frac{\bar w}{r}= \bar P (x,r) + O(|X|^k), \quad \quad \deg \bar P=k-1,$$
and the theorem is proved by writing
$$w(x',0,0)=\bar Q(x') + O(|x'|^{k+1}), \quad \quad \deg \bar Q=k.$$

The claim that $T$ solves the same problem as $w$ now follows from scaling, as in the final part of the previous proof.
\qed

\

We conclude this appendix, with the proof of our needed version of the Whitney Extension Theorem.

\

\noindent{\it Proof of Theorem $\ref{WE}.$}
In our case, the extension $E(Q)$ can be constructed by a convolution type operator.
Let $\rho$ be a smooth function with support in $B_{1/2} \subset \R^n$, such that
$$\int_{\R^n} \rho \, \, dx=1\quad \mbox{and} \quad \quad \int_{\R^n} \rho \,  x^\mu \, \, dx =0 \quad \mbox{if} \quad 1\leq |\mu| \leq k+2.$$

Then polynomials of degree $k+2$ are left invariant after convolution with $\rho$: $$P   *\rho = P \quad \quad \deg P=k+2.$$

Define $$E(Q) \, (x) : = \int Q(y) \, \, \rho \left(\frac{x-y}{d} \right)d^{-n} \, \, dy,$$ where $d$ denotes the distance from $x$ to $\Gamma$.

We show that $E(Q)$ satisfies the required properties. It suffices to show that for indices $\mu$ with $|\mu|=k+2$, say in $B_{\lambda/2}( \lambda e_n)$ with $\lambda$ small, we have
$$[D^\mu E(Q) ]_{C^\alpha(B_{\lambda/2}(\lambda e_n))} \le C, \quad |D^\mu E(Q)(\lambda e_n) - D^\mu Q(0)| \le C \lambda^\alpha.$$

Since $Q$ is pointwise $C^{k+2,\alpha}$ at the origin we have
$$Q=P_0+h, \quad \quad \deg P_0=k+2, \quad \|h\|_{L^\infty(B_{2\lambda})}\leq C\lambda^{k+2+\alpha},$$ and $$E(Q)=P_0+ E(h).$$

We need to show that in $B_{\lambda/2}( \lambda e_n)$,
$$[D^\mu \, E(h)]_{C^\alpha} \leq C, \quad \quad |D^\mu \, E(h)| \le C \lambda^\alpha, \quad \quad |\mu|=k+2.$$
Indeed, after a dilation of factor $1/\lambda$ we have for $x \in B_{1/2}(e_n)$
$$\overline{E(h)} \, (x) := E(h) \, (\lambda x) = \int_{B_2} h(\lambda y) \, \rho \left(\frac{x-y}{d} \right) d^{-n} \, dy,$$ with $d$ the distance from $x$ to $\Gamma/\lambda$.
Notice that $d \in C^{k+2,\alpha}_x$ hence $$\rho \left(\frac{x-y}{d}\right) d^{-n} \quad \quad \mbox{has bounded} \quad C_x^{k+2,\alpha} \quad \mbox{norm in $B_{1/2}(e_n)$}.$$ Thus, by using the bound on $h$ we find
$$\|D^\mu \, \, \overline{E(h)}\|_{C^\alpha (B_{1/2}(e_n))} \le C \lambda^{k+2+\alpha},$$
which gives the desired result.
\qed

\end{document}